\documentclass[10pt,twoside,a4paper]{article}
\oddsidemargin 0.0in \evensidemargin 0.0in
\textwidth 6.5in
\usepackage[italian,english]{babel}
\usepackage{amsmath,amsfonts,amssymb,amscd,lscape,amstext,amsthm}
\usepackage{tabularx}
\pagestyle{myheadings} \markboth{LIPSCHITZ STABILITY FOR ANISOTROPIC CONDUCTIVITY}{GABURRO AND SINCICH}
\newtheorem{lemma}{\rm \indent LEMMA}[section]
\newtheorem{definition}{\rm \indent D\small EFINITION}[section]
\newtheorem{theorem}[lemma]{\rm \indent T\small HEOREM}
\newtheorem{remark}[lemma]{\rm \indent Remark}
\newtheorem{proposition}[lemma]{\rm \indent Proposition}
\numberwithin{equation}{section}
\begin{document}
\title{\bf{Lipschitz stability for the inverse conductivity problem for a conformal class of anisotropic conductivities }}
\author{Romina Gaburro\thanks{Department of Mathematics and Statistics, University of Limerick, Ireland.  Email: romina.gaburro@ul.ie}\qquad
Eva Sincich\thanks{Laboratory for Multiphase Processes, University
of Nova Gorica, Slovenia. Email: eva.sincich@ung.si}}

\date{}
\maketitle \footnotesize{\textbf{Abstract.} We consider the stability issue of the inverse conductivity
problem for a conformal class of anisotropic conductivities in
terms of the local Dirichlet-to-Neumann map. We extend here the
stability result obtained by Alessandrini and Vessella in \textnormal{Advances
in Applied Mathematics 35:207-241}, where the authors considered the piecewise
constant isotropic case.

\section{\normalsize{Introduction}}\label{sec1}
\setcounter{equation}{0} 

\normalsize In the present paper we study the
stability issue for the inverse conductivity problem in the
presence of anisotropic conductivity which is \textit{a-priori}
known to depend linearly on a unknown piecewise-constant function.
Let us start by recalling the basic formulation of the inverse
conductivity problem.

 In absence of internal sources, the electrostatic
potential $u$ in a conducting body, described by a domain
$\Omega\subset{\mathbb R}^n$, is governed by the elliptic equation
\begin{equation}\label{eq conduttivita'}
\mbox{div}(\sigma\nabla{u})=0\qquad\mbox{in}\quad\Omega,
\end{equation}
where the symmetric, positive definite matrix $\sigma=\sigma(x)$,
$x\in\Omega$ represents the (possibly anisotropic) electric
conductivity. The inverse conductivity problem consists of finding
$\sigma$ when the so called Dirichlet-to-Neumann (D-N) map
$$
\Lambda_{\sigma}:u\vert_{\partial\Omega}\in{H}^{\frac{1}{2}}(\partial\Omega)
\longrightarrow{\sigma}\nabla{u}\cdot\nu\vert_{\partial\Omega}\in{H}^{-\frac{1}{2}}(\partial\Omega)
$$
is given for any $u\in{H}^{1}(\Omega)$ solution to (1.1). Here,
 $\nu$ denotes the unit outer normal to $\partial\Omega$. If
measurements can be taken only on one portion $\Sigma$ of
$\partial\Omega$, then the relevant map is called the local
Dirichlet-to-Neumann map. Let $\Sigma$ be a non-empty open portion
of $\partial\Omega$ and let us introduce the subspace of
$H^{\frac{1}{2}}(\partial\Omega)$
\begin{equation}\label{Hco}
H^{\frac{1}{2}}_{co}(\Sigma)=\big\{f\in
H^{\frac{1}{2}}(\partial\Omega) \:\vert\:\textnormal{supp}
\:f\subset\Sigma\big\}.
\end{equation}
The local D-N map is given, in its weak
formulation, as the operator $\Lambda_{\sigma}^{\Sigma}$ such that
\begin{equation}
\langle\Lambda_{\sigma}^{\Sigma}\;u,\;\phi\rangle=\int_{\Omega}\sigma\;\nabla
u\cdot\nabla\phi,
\end{equation}
for any $u,\;\phi\in{H}^{1}(\Omega)$,
$u\vert_{\partial\Omega},\;\phi\vert_{\partial\Omega}\in{H}_{co}^{\frac{1}{2}}(\Sigma)$
and $u$ is a weak solution to \eqref{eq conduttivita'}.

The problem of recovering the conductivity of a body by taking
measurements of voltage and current on its surface has came to be
known as Electrical Impedance Tomography (EIT). Different
materials display different electrical properties, so that a map
of the conductivity $\sigma(x)$, $x\in\Omega$ can be used to investigate internal properties of
$\Omega$. EIT has many important applications in fields such as
geophysics, medicine and non--destructive testing of materials.
The first mathematical formulation of the inverse conductivity
problem is due to A. P. Calder\'{o}n \cite{C}, 
where he addressed the problem of whether it is possible to
determine the (isotropic) conductivity $\sigma = \gamma I$ by the D-N map. Although
Calder\'{o}n studied the problem of determining $\sigma$ from the
knowledge of the quadratic form

\[Q_{\gamma}(u)=\int_{\Omega} \gamma |\nabla u|^2,\]

where $u$ is a solution to \eqref{eq conduttivita'}, it is well
known that the knowledge of $Q_{\sigma}$ is equivalent to the
knowledge of $\Lambda_{\sigma}$ by

\[Q_{\gamma}(u)=\left<\Lambda_{\sigma}u,u\right>,\qquad\textnormal{for\:every}\:u\in H^{1}(\Omega),\]

where $\sigma = \gamma I$. Here $\left<\cdot,\cdot\right>$ denotes the dual
pairing between $H^{1/2}(\partial\Omega)$ and its dual
$H^{-1/2}(\partial\Omega)$, with respect to the $L^{2}$ scalar
product. \cite{C} opened the way to the solution to the uniqueness
issue where one is asking whether $\sigma$ can be determined by
the knowledge of $\Lambda_{\sigma}$ (or
$\Lambda_{\sigma}^{\Sigma}$ in the case of local measurements). As main contributions in this respect we mention the papers by
Kohn and Vogelius \cite{K-V1,K-V2}, Sylvester and Uhlmann \cite{S-U} and Nachman \cite{Na}.  We
refer to \cite{B}, \cite{C-I-N} and \cite{U} for an overview of
recent developments regarding the issues of uniqueness and
reconstruction of the conductivity.\\
Regarding the stability, Alessandrini proved in \cite{A} that,
assuming $n\geq 3$ and \textit{a-priori} bounds on $\gamma$ of the
form

\begin{equation}\label{regularity bounds on sigma}
||\gamma||_{H^s(\Omega)}\leq E,
\quad\textnormal{for\:some}\:s>\frac{n}{2}+2,
\end{equation}

$\gamma$ depends continuously on $\Lambda_{\sigma}$ with a modulus
of continuity of logarithmic type. In \cite{A1}, \cite{A2} the
same author subsequently proved that a similar stability estimate
holds when the \textit{a-priori} bound \eqref{regularity bounds on
sigma} is replaced by

\begin{equation}\label{lower regularity bounds gamma}
||\gamma||_{W^{2,\infty}(\Omega)}\leq E.
\end{equation}

For the two-dimensional case, logarithmic type stability estimates
were obtained in \cite{B-B-R}, \cite{B-F-R} and \cite{Liu}. Unfortunatelly, all the above
results have the common inconvenient logarithmic type of stability
which cannot be avoided \cite{A3}. In fact Mandache \cite{Ma} showed that the
logarithmic stability is the best possible, in any dimension
$n\geq 2$ if \textit{a-priori} assumptions of the form

\begin{equation}\label{regularity bounds on sigma strong}
||\gamma||_{C^k(\Omega)}\leq E
\end{equation}

for any $k=0,1,2,\dots$ are assumed. It seems therefore reasonable
to think that, in order to restore stability in a really
(Lipschitz) stable fashion, one needs to replace in some way the
\textit{a-priori} assumptions expressed in terms of regularity
bounds such as \eqref{regularity bounds on sigma strong}, with
\textit{a-priori} pieces of information of a different type.
Alessandrini and Vessella showed in \cite{A-V} that $\gamma$
depends in a Lipschitz continuous fashion upon the local D-N map,
by assuming that $\gamma$ is a function \textit{a-priori} known to
be piecewise constant

\begin{equation}\label{piecewise constant sigma}
\gamma(x)=\sum_{j=1}^{N}\gamma_j \chi_{D_j}(x),
\end{equation}

where each subdomain of $\Omega$, $D_j$, $j=1,\dots , N$ is
\textit{given} and each number $\gamma_j$, $j=1,\dots , N$ is
\textit{unknown}. From a medical imaging point of view, each $D_j$
may represent the area occupied by different tissues or organs
and one can think that the geometrical configuration of each $D_j$
is given by means of other imaging techniques such as MRI for
example. Since most tissues in the human body are anisotropic, the
present authors, motivated by the work in \cite{A-V} and its
medical application, consider here the more general case of an
\textit{anisotropic} conductivity of type

\[\sigma(x) = \gamma(x) A(x),\]

where $A(x)$ is a known, matrix valued function which is Lipschitz continuous and
$\gamma(x)$ is of type \eqref{piecewise constant sigma}. The authors would like to stress out that anisotropic conductivity appears in nature, for example as a homogenization limit in layered or fibrous structures
such as rock stratum or muscle, as a result of crystalline structure or of deformation of an
isotropic material, therefore the case treated in this paper seems to be a natural extension of \cite{A-V} relevant to several applications. For related results in the anisotropic case we also refer to \cite{A-G}, \cite{A-G1}, \cite{A-L-P}, \cite{Be}, \cite{F-K-R}, \cite{G-L}, \cite{L} and \cite{La-U}).  The present paper improves
upon the results obtained in \cite{A-V} in the sense that the
global Lipschitz stability estimate obtained there is here adapted
to a special anisotropic type of conductivity. The precise
assumptions shall be illustrated in section \ref{sec2}. We also recall \cite{B-F}, \cite{B-F-V} and \cite{B-dH-Q} where similar Lipschitz stability results have been obtained for complex conductivity, the Lam\'{e} parameters and for a Schr\"{o}dinger type of equation respectively.
\\
For a more in-dept description and consideration of the stability
issue and related open problems in the inverse conductivity
problem we refer to \cite{A3} and \cite{A-V}.

Our approach follows the one by Alessandrini and Vessella \cite{A-V} of constructing singular solutions and studying their asymptotic behaviour when the singularity approaches the discontinuity interfaces. However, in order to deal with the present structure of conductivity we had to develop original asymptotic analysis estimates and an accurate quantitative control of the error terms which represent a novel feature in the treatment of anisotropic type of conductivity.

The paper is organized as follows. Our main assumptions and our main result (Theorem \ref{teorema principale}) are contained in section 2, where the proof of Theorem \ref{teorema principale} is contained in section 3. This section also lists the two main results (Theorem \ref{teorema stime asintotiche} and Proposition \ref{proposizione unique continuation finale}) needed to build the machinery for the proof of Theorem \ref{teorema principale}.  Theorem \ref{teorema stime asintotiche} provides original asymptotic estimates for the Green function of the conductivity equation, for conductivities belonging to a special anisotropic conformal class $\mathcal{C}$, at the interfaces between the given domains $D_j$, where the conductivity is discontinuous. Proposition \ref{proposizione unique continuation finale} provides estimates of unique continuation of the solution to the conductivity equation for conductivities in $\mathcal{C}$. Section 4 is devoted to the proof of Theorem \ref{teorema stime asintotiche} and Proposition \ref{proposizione unique continuation finale}. For the proof of Theorem \ref{teorema stime asintotiche} we provide the explicit form of the fundamental solution for the conformal anisotropic two-phase case with flat interface. The proof of Proposition \ref{proposizione unique continuation finale} is a straight forward consequence Proposition \ref{proposizione pre unique continuation} which we state in this section. The proof of the latter is independent from the presence of anisotropy in the conductivity, therefore we refer to \cite{A-V} for a full proof of it. In this paper we point out the main facts on which the proof is based on only.

\section{\normalsize{Main Result}}\label{sec2}
\setcounter{equation}{0}
\subsection{\normalsize{Notation and definitions}}

In several places within this manuscript it will be useful to single out one coordinate
direction. To this purpose, the following notations for
points $x\in \mathbb{R}^n$ will be adopted. For $n\geq 3$,
a point $x\in \mathbb{R}^n$ will be denoted by
$x=(x',x_n)$, where $x'\in\mathbb{R}^{n-1}$ and $x_n\in\mathbb{R}$.
Moreover, given a point $x\in \mathbb{R}^n$,
we shall denote with $B_r(x), B_r'(x)$ the open balls in
$\mathbb{R}^{n},\mathbb{R}^{n-1}$ respectively centred at $x$ with radius $r$
and by $Q_r(x)$ the cylinder

\[Q_r(x)=B_r'(x')\times(x_n-r,x_n+r).\]

We shall also denote

\begin{eqnarray*}
& & \mathbb{R}^n_+ = \{(x',x_n)\in \mathbb{R}^n| x_n>0 \};\quad\mathbb{R}^n_- = \{(x',x_n)\in \mathbb{R}^n| x_n<0 \};\\
& & B^+_r = B_r\cap\mathbb{R}^n_+;\quad B^-_r = B_r\cap\mathbb{R}^n_-;\\
& & Q^+_r = Q_r\cap\mathbb{R}^n_+;\quad Q^{-}_r = Q_r\cap\mathbb{R}^n_-.
\end{eqnarray*}

In the sequel, we shall make a repeated use of quantitative
notions of smoothness for the boundaries of various domains. Let
us introduce the following notation and definitions.

\begin{definition}\label{def Lipschitz boundary}
Let $\Omega$ be a domain in $\mathbb R^n$. We say that a portion
$\Sigma$ of $\partial\Omega$ is of Lipschitz class with constants
$r_0,L$ if for any $P\in\partial\Sigma$ there exists a rigid
transformation of $\mathbb R^n$ under which we have $P\equiv0$ and
$$\Omega\cap Q_{r_0}=\{x\in Q_{r_0}\,:\,x_n>\varphi(x')\},$$
where $\varphi$ is a Lipschitz function on $B'_{r_0}$ satisfying

\[\varphi(0)=|\nabla_{x'}\varphi(0)|=0;\qquad
\|\varphi\|_{C^{0,1}(B'_{r_0})}\leq Lr_0.\]

It is understood that $\partial\Omega$ is of Lipschitz class with
constants $r_0,L$ as a special case of $\Sigma$, with
$\Sigma=\partial\Omega$.
\end{definition}

\begin{definition}\label{def C1 alpha boundary}
Let $\Omega$ be a domain in $\mathbb R^n$. Given $\alpha$,
$\alpha\in(0,1]$, we say that a portion $\Sigma$ of
$\partial\Omega$ is of class $C^{1,\alpha}$ with constants $r_0,M$
if for any $P\in\Sigma$ there exists a rigid transformation of
$\mathbb R^n$ under which we have $P=0$ and
$$\Omega\cap Q_{r_0}=\{x\in Q_{r_0}\,:\,x_n>\varphi(x')\},$$
where $\varphi$ is a $C^{1,\alpha}$ function on $B'_{r_0}$
satisfying
\[\varphi(0)=|\nabla_{x'}\varphi(0)|=0;\qquad\|\varphi\|_{C^{1,\alpha}(B'_{r_0})}\leq Mr_0,\]

where we denote
\begin{eqnarray}\label{chap2:8}
\|\varphi\|_{C^{1, \alpha}(
B^{'}_{r_0})}&=&\|\varphi\|_{L^{\infty}( B^{'}_{r_0})}+
{r_0}\|\nabla \varphi\|_{L^{\infty}( B^{'}_{r_0})}
+{r_0}^{1+\alpha}\sup_{\substack {x,y  \in B^{'}_{r_0}\\x\ne y
}}\frac{|\nabla \varphi (x)-\nabla \varphi (y)|}{|x-y|^{\alpha}}\
.\nonumber
\end{eqnarray}

\end{definition}


Let us rigorously define the local D-N map.\\

\begin{definition}
Let $\Omega$ be a domain in $\mathbb{R}^n$ with Lipschitz boundary
$\partial\Omega$ and $\Sigma$ an open non-empty subset of
$\partial\Omega$. Assume that $\sigma\in
L^{\infty}(\Omega\:,Sym_{n})$ satisfies the ellipticity condition
\begin{eqnarray}\label{ellitticita'sigma}
\lambda^{-1}\vert\xi\vert^{2}\leq{\sigma}(x)\xi\cdot\xi\leq\lambda\vert\xi\vert^{2},
& &for\:almost\:every\:x\in\Omega,\nonumber\\
& &for\:every\:\xi\in\mathbb{R}^{n}.
\end{eqnarray}

The local Dirichlet-to-Neumann map associated to $\sigma$ and
$\Sigma$ is the operator
\begin{equation}\label{mappaDN}
\Lambda_{\sigma}^{\Sigma}:H^{\frac{1}{2}}_{co}(\Sigma)\longrightarrow
{H}^{-\frac{1}{2}}_{co}(\Sigma)
\end{equation}
 defined by
\begin{equation}\label{def DN locale}
<\Lambda_{\sigma}^{\Sigma}\:g,\:\eta>\:=\:\int_{\:\Omega}
\sigma(x) \nabla{u}(x)\cdot\nabla\phi(x)\:dx,
\end{equation}
for any $g$, $\eta\in H^{\frac{1}{2}}_{co}(\Sigma)$, where
$u\in{H}^{1}(\Omega)$ is the weak solution to
\begin{displaymath}
\left\{ \begin{array}{ll} \textnormal{div}(\sigma(x)\nabla
u(x))=0, &
\textrm{$\textnormal{in}\quad\Omega$},\\
u=g, & \textrm{$\textnormal{on}\quad{\partial\Omega},$}
\end{array} \right.
\end{displaymath}
and $\phi\in H^{1}(\Omega)$ is any function such that
$\phi\vert_{\partial\Omega}=\eta$ in the trace sense. Here we
denote by $<\cdot,\:\cdot>$ the $L^{2}(\partial\Omega)$-pairing
between $H^{\frac{1}{2}}_{co}(\Sigma)$ and its dual
$H^{-\frac{1}{2}}_{co}(\Sigma)$.
\end{definition}
Note that, by \eqref{def DN locale}, it is easily verified that
$\Lambda^{\Sigma}_{\sigma}$ is selfadjoint. We shall denote by
$\parallel\cdot\parallel_{*}$ the norm on the Banach space of
bounded linear operators between $H^{\frac{1}{2}}_{co}(\Sigma)$
and $H^{-\frac{1}{2}}_{co}(\Sigma)$.

\subsection{\normalsize{Our assumptions}}\label{subsection assumptions}
We give here the precise assumptions for the domain $\Omega$ under investigation and its conductivity $\sigma$. The dimension of the space for $\Omega$ is denoted by $n$ and
for sake of simplicity is only consider $n\geq 3$.

\subsubsection{\normalsize{Assumptions about the domain $\Omega$}}\label{subsec assumption domain}

\begin{enumerate}

\item We assume that $\Omega$ is a domain in $\mathbb{R}^n$
satisfying

\begin{equation}\label{assumption Omega}
|\Omega|\leq N r_0 ^n,
\end{equation}

where $|\Omega|$ denotes the Lebesgue measure of $\Omega$.

\item We assume that $\partial\Omega$ is of Lipschitz class with
constants $r_0$, $L$.

\item We fix an open non-empty subset $\Sigma$ of $\partial\Omega$
(where the measurements in terms of the local D-N map are taken).

\item \[\bar\Omega = \bigcup_{j=1}^{N}\bar{D}_j,\]

where $D_j$, $j=1,\dots , N$ are known open sets of
$\mathbb{R}^n$, satisfying the conditions below.

\begin{enumerate}
\item $D_j$, $j=1,\dots , N$ are connected and pairwise
nonoverlapping.

\item $\partial{D}_j$, $j=1,\dots , N$ are of Lipschitz class with
constants $r_0$, $L$.

\item There exists one region, say $D_1$, such that
$\partial{D}_1\cap\Sigma$ contains a $C^{1,\alpha}$ portion
$\Sigma_1$ with constants $r_0$, $M$.

\item For every $i\in\{2,\dots , N\}$ there exists $j_1,\dots ,
j_K\in\{1,\dots , N\}$ such that

\begin{equation}\label{catena dominii}
D_{j_1}=D_1,\qquad D_{j_K}=D_i.
\end{equation}

In addition we assume that, for every $k=1,\dots , K$,
$\partial{D}_{j_k}\cap \partial{D}_{j_{k-1}}$ contains a
$C^{1,\alpha}$ portion $\Sigma_k$ (here we agree that
$D_{j_0}=\mathbb{R}^n\setminus\Omega$), such that

\[\Sigma_1\subset\Sigma,\]

\[\Sigma_k\subset\Omega,\quad\mbox{for\:every}\:k=2,\dots , K,\]

and, for every $k=1,\dots , K$, there exists $P_k\in\Sigma_k$ and
a rigid transformation of coordinates under which we have $P_k=0$
and

\begin{eqnarray}
\Sigma_k\cap{Q}_{r_{0}/3} &=&\{x\in
Q_{r_0/3}|x_n=\phi_k(x')\}\nonumber\\
D_{j_k}\cap {Q}_{r_{0}/3} &=&\{x\in
Q_{r_0/3}|x_n>\phi_k(x')\}\nonumber\\
D_{j_{k-1}}\cap {Q}_{r_{0}/3} &=&\{x\in
Q_{r_0/3}|x_n<\phi_k(x')\},
\end{eqnarray}

where $\phi_k$ is a $C^{1,\alpha}$ function on $B'_{r_o/3}$
satisfying

\[\phi_k(0)=|\nabla\phi_k(0)|=0\]

and

\[||\phi_k||_{C^{1,\alpha}(B'_{r_0})}\leq Mr_0.\]
\end{enumerate}
\end{enumerate}

\subsubsection{\normalsize{A-priori information on the conductivity $\gamma$: the class $\mathcal{C}$}}

\begin{definition}
We shall say that $\sigma\in\mathcal{C}$ if $\sigma$ is of type

\begin{equation}\label{a priori info su sigma}
\sigma_{A}(x)=\sum_{j=1}^{N}\gamma_{j}\:A(x)\chi_{D_j}(x),\qquad
x\in\Omega,
\end{equation}

where $\gamma_{j}$ are unknown real numbers, $D_j$, $j=1,\dots ,
N$ are the given subdomains introduced in section \ref{subsec assumption
domain} and

\begin{equation}\label{apriorigamma}
\bar{\gamma}\le \gamma_j\le  \bar{\gamma}^{-1} , \qquad \mbox{for
any}\:j=1,\dots n.
\end{equation}

$A(x)$ is a known Lipschitz matrix valued function satisfying

\begin{equation}\label{aprioria}
\|A\|_{C^{0,1}(\Omega)}\le \bar{A},
\end{equation}

where $\bar{A}>0$ is a constant and

\begin{eqnarray}\label{unifellip}
{\lambda}^{-1}|\xi|^2\le A(x)\xi\cdot\xi\le \lambda|\xi|^2 ,\qquad &&\mbox{for almost every}\ x\in \Omega, \\
 &&\mbox{for every}\ \xi\in\mathbb{R}^n \ . \nonumber
\end{eqnarray}

\end{definition}


\begin{definition}
Let $N$, $r_0$, $L$, $M$, $\alpha$, $\lambda, \bar{\gamma}$, $\bar{A}$ be given
positive numbers with $N\in\mathbb{N}$ and $\alpha\in(0,1]$. We will
refer to this set of numbers, along with the space dimension $n$,
as to the \textit{a-priori data}. 
\end{definition}

\begin{theorem}\label{teorema principale}
Let $\Omega$, $D_j$, $j=1,\dots , N$ and $\Sigma$ be a domain, $N$ subdomains of $\Omega$ and a portion of $\partial\Omega$ as in section \ref{subsec assumption domain} respectively.
If $\sigma_{A}^{(i)}\in\mathcal{C}$, $i=1,2$ are two conductivities of type

\begin{equation}\label{conduttivita anisotrope}
\sigma_{A}^{(i)}(x)=\sum_{j=1}^{N}\gamma_{j}^{(i)}\:A(x)\chi_{D_j}(x)\qquad
x\in\Omega,\:i=1,2,
\end{equation}

then we have

\begin{equation}\label{stabilita' globale}
||\sigma_{A}^{(1)}-\sigma_{A}^{(2)}||_{L^{\infty}(\Omega)}\leq C
||\Lambda^{\Sigma}_{\sigma_{A}^{(1)}}-\Lambda^{\Sigma}_{\sigma_{A}^{(2)}}||_{\ast},
\end{equation}

where $C$ is a positive constant that depends on the a-priori data
only.

\end{theorem}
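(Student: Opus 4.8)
Since $A$ is uniformly elliptic, \eqref{unifellip} gives $\lambda^{-1}|\gamma_j^{(1)}-\gamma_j^{(2)}|\le\|\sigma_A^{(1)}-\sigma_A^{(2)}\|_{L^\infty(D_j)}\le\lambda|\gamma_j^{(1)}-\gamma_j^{(2)}|$ on each $D_j$, so \eqref{stabilita' globale} is equivalent to $\max_{j}|\gamma_j^{(1)}-\gamma_j^{(2)}|\le C\varepsilon$ with $\varepsilon:=\|\Lambda^{\Sigma}_{\sigma_A^{(1)}}-\Lambda^{\Sigma}_{\sigma_A^{(2)}}\|_{\ast}$ (the case $\varepsilon$ not small being covered by \eqref{apriorigamma} with a large constant). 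I would recover the $\gamma_j$'s one subdomain at a time along the chain \eqref{catena dominii}, by induction on $k$: assuming $|\gamma_{j_\ell}^{(1)}-\gamma_{j_\ell}^{(2)}|\le C\varepsilon$ for $\ell<k$, prove the same for $\ell=k$; every index occurs as the last term of some such chain — with $D_1$ as the base case — so this suffices. The tool is a bilinear functional of singular solutions. Enlarge $\Omega$ to $\widetilde\Omega$ across the $C^{1,\alpha}$ portion $\Sigma_1\subset\Sigma$, extend $A$ in its class, arrange $\sigma_A^{(1)}=\sigma_A^{(2)}$ on $\widetilde\Omega\setminus\overline\Omega$, let $G_i(\cdot,y)$ be the Green's function of $\operatorname{div}(\sigma_A^{(i)}\nabla\,\cdot\,)$ in $\widetilde\Omega$ with pole $y$, and for $y\ne z$ set $S(y,z):=\int_\Omega(\sigma_A^{(1)}-\sigma_A^{(2)})\,\nabla_x G_1(x,y)\cdot\nabla_x G_2(x,z)\,dx$. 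Integration by parts gives the two facts the argument rests on: the identity $S(y,z)=G_2(y,z)-G_1(y,z)$, and Alessandrini's identity, which — using that the conductivities agree outside $\Omega$ — expresses $S(y,z)$, for $y,z\in\widetilde\Omega\setminus\overline\Omega$ near $\Sigma_1$, through $\Lambda^{\Sigma}_{\sigma_A^{(1)}}-\Lambda^{\Sigma}_{\sigma_A^{(2)}}$ acting on the $\Sigma$-traces of the singular solutions (which concentrate on $\Sigma$ as the poles approach $\Sigma_1$), so that $|S(y,z)|\le C\varepsilon\,d^{-(n-2)}$ with $d$ the distance of the poles to $\partial\Omega$.

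To reach the interior interfaces I would combine the two results of Section 3. Proposition \ref{proposizione unique continuation finale} (quantitative unique continuation across the $C^{1,\alpha}$ interfaces, its proof unaffected by the anisotropy) propagates the last bound along a string of balls following \eqref{catena dominii} from $\widetilde\Omega\setminus\overline\Omega$ into a neighbourhood of a chosen point $P_k\in\Sigma_k$, staying away from $\partial\Omega$ and from the singularities; with $y\in D_{j_k}$ and $z\in D_{j_{k-1}}$ placed symmetrically about $P_k$ at distance $t$ from $\Sigma_k$, this gives $|S(y,z)|\le C\varepsilon\,t^{-(n-2)}$ \emph{with $C$ independent of $t$}, and this uniformity — the absence of a H\"older loss in the propagation — is what keeps the final estimate Lipschitz. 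Theorem \ref{teorema stime asintotiche} supplies, for the same poles, the leading behaviour of $G_i(y,z)$: after flattening $\Sigma_k$ and freezing $A$ at $A(P_k)$, the conformal change $x\mapsto A(P_k)^{-1/2}x$ turns $\operatorname{div}(\gamma\,A(P_k)\nabla\,\cdot\,)$ into $\gamma\,\Delta$, so the model is the classical isotropic two--phase problem with explicit fundamental solution (source plus a reflected term whose reflection coefficient is built from the two values of $\gamma$), and the theorem makes this rigorous while quantitatively bounding the error terms produced by the Lipschitz $A$, the $C^{1,\alpha}$ (non-flat) interface, and the remote part of $G_i$. Inserting these expansions into $S=G_2-G_1$ yields
\[
S(y,z)=c_k\Bigl[\kappa\bigl(\gamma^{(2)}_{j_k},\gamma^{(2)}_{j_{k-1}}\bigr)-\kappa\bigl(\gamma^{(1)}_{j_k},\gamma^{(1)}_{j_{k-1}}\bigr)\Bigr]\,t^{-(n-2)}+R_k(t),
\]
where $c_k\ge c_0>0$ depends only on the a-priori data, $\kappa$ is an explicit smooth function (essentially the transmission coefficient of the two-phase model) with $|\kappa(a,b)-\kappa(a',b')|$ comparable, on the range \eqref{apriorigamma}, to $|(a-a')+(b-b')|$, and $|R_k(t)|\le C\,t^{-(n-2)+\delta}\bigl(|\gamma^{(1)}_{j_k}-\gamma^{(2)}_{j_k}|+|\gamma^{(1)}_{j_{k-1}}-\gamma^{(2)}_{j_{k-1}}|\bigr)+C$ for some $\delta>0$ (the bounded part collecting the contributions of the regions $D_m$ lying at distance $\ge r_0/3$ from $P_k$; by the geometry of Section \ref{subsec assumption domain} no other region reaches $P_k$).

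Comparing the two estimates for $S(y,z)$ and dividing by $t^{-(n-2)}$,
\[
c_0\,\bigl|(\gamma^{(1)}_{j_k}-\gamma^{(2)}_{j_k})+(\gamma^{(1)}_{j_{k-1}}-\gamma^{(2)}_{j_{k-1}})\bigr|\le C\varepsilon+C\,t^{\delta}\bigl(|\gamma^{(1)}_{j_k}-\gamma^{(2)}_{j_k}|+|\gamma^{(1)}_{j_{k-1}}-\gamma^{(2)}_{j_{k-1}}|\bigr)+C\,t^{n-2};
\]
the last two terms carry a positive power of $t$ against a-priori bounded factors (by \eqref{apriorigamma}), so letting $t\to0$ — the constant in the $\varepsilon$-term being $t$-independent — leaves $|(\gamma^{(1)}_{j_k}-\gamma^{(2)}_{j_k})+(\gamma^{(1)}_{j_{k-1}}-\gamma^{(2)}_{j_{k-1}})|\le C'\varepsilon$, whence by the triangle inequality and the inductive hypothesis on $j_{k-1}$ one gets $|\gamma^{(1)}_{j_k}-\gamma^{(2)}_{j_k}|\le C''\varepsilon$. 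This closes the induction; the base case $k=1$ is the same computation with both poles kept in $\widetilde\Omega\setminus\overline\Omega$ near $\Sigma_1\subset\Sigma$, so that no propagation is needed and the jump across $\Sigma_1$ couples $\gamma_1^{(i)}$ with the common exterior value.

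The step I expect to be the real obstacle is the one underlying the display above: extracting from $G_2-G_1$ a clean $t^{-(n-2)}$ term with a sign-definite coefficient while controlling every perturbation — the Lipschitz variation of $A$, the $C^{1,\alpha}$ curvature of $\Sigma_k$, and the tails of the Green's functions — by $o(t^{-(n-2)})$ uniformly in the a-priori data. This is precisely the content of Theorem \ref{teorema stime asintotiche}; together with the linear-in-$\varepsilon$ propagation of Proposition \ref{proposizione unique continuation finale}, it reduces the proof of Theorem \ref{teorema principale} to the assembly carried out above.
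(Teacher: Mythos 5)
There is a genuine gap, and it is precisely the step you flag as "the one underlying the display" — but the obstacle is not where you think. The problem is your claim that Proposition~\ref{proposizione unique continuation finale} propagates the boundary estimate down to the interface with a \emph{$t$-independent} constant, i.e.\ $|S(y,z)|\le C\varepsilon\,t^{-(n-2)}$. This is not what the proposition says, and it is not what quantitative unique continuation delivers. Look at the actual statement: the estimate for $\tilde{S}_{\mathcal{U}_K}(w_{\bar h},w_{\bar h})$ is
\[
\left|\tilde{S}_{\mathcal{U}_K}\bigl(w_{\bar h},w_{\bar h}\bigr)\right|
\le r_0^{2-n}\,C^{\bar h}\,(E+\varepsilon_0)\,
\Bigl(\omega_{1/C}^{(2K)}\bigl(\tfrac{\varepsilon_0}{E+\varepsilon_0}\bigr)\Bigr)^{(1/C)^{\bar h}},
\]
where $\bar h=\min\{k:d_k\le r\}$ grows without bound as $r\to0$, so $C^{\bar h}\to\infty$ and $(1/C)^{\bar h}\to0$. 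The logarithmic modulus $\omega_{1/C}$ (iterated $2K$ times, and raised to a vanishing power) is intrinsic to propagating information along a chain of three-sphere inequalities; there is no "absence of Hölder loss." Consequently your final step — divide by $t^{-(n-2)}$ and let $t\to0$ so the error terms vanish while the $\varepsilon$-term stays clean — cannot be carried out: the upper bound on $|S|$ degrades as $t\to0$ at a rate that defeats the $t^\delta$ gain from the asymptotic expansion. You cannot take the limit.

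What the paper actually does is much more delicate. It does not let the evaluation point go to the interface; it keeps $r$ comparable to $d_1$, balances the logarithmic unique-continuation estimate against the power-of-$\sigma$ error from Theorem~\ref{teorema stime asintotiche} (this gives the estimate \eqref{stima gamma1- gamma2}, and then \eqref{461}), and obtains an inequality for $\delta_k$ in terms of $\delta_{k-1}$ involving the modulus $\omega_{1/C}$. Iterating over $k$ yields $E\le C(\varepsilon+E)\bigl(\omega_{1/C}^{(K^2)}(\varepsilon/(\varepsilon+E))\bigr)^{1/C}$, and the Lipschitz bound is recovered only a posteriori from this logarithmic inequality by the algebraic trick of assuming $E>\varepsilon e^2$ and inverting the iterated modulus. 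That algebraic bootstrap, not a clean $t\to0$ limit, is where the Lipschitz constant comes from, and it is the part of the argument your proposal omits. A secondary structural issue: the paper's key object is $\tilde{S}_{\mathcal{U}_{k-1}}$, the integral over $\mathcal{U}_{k-1}$ only (the region not yet swept), which is \emph{not} equal to $G_2-G_1$; the restriction of the integration domain is what produces the explicit $\delta_{k-1}$ contribution in \eqref{stima S} and feeds the induction. Your full-domain identity $S=G_2-G_1$ and the catch-all "$+C$" in $R_k(t)$ blur this bookkeeping.
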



\section{\normalsize{Proof of the main result}}\label{PMR}

The proof of our main result (theorem \ref{teorema principale}) is based on an argument that combines asymptotic type of estimates for the Green's function of the operator

\begin{equation}\label{operatore conduttivita' misurabile}
L=\mbox{div}\left(\sigma(x)\nabla\right)\qquad\mbox{in}\quad\Omega,
\end{equation}

(theorem \ref{teorema stime asintotiche}), with $\sigma\in\mathcal{C}$, together with a result of unique continuation (proposition \ref{proposizione unique continuation finale}) for solutions to

\[Lu=0,\qquad\mbox{in}\quad\Omega.\]

We shall give the precise formulation of these results in what
follows.

\subsection{\normalsize{Measurable conductivity $\sigma$}}

We shall start with some general considerations about the Green's
function $G(x,y)$ associated to the operator \eqref{operatore conduttivita' misurabile}, where $\sigma$ is merely a measurable matrix valued function
satisfying the ellipticity condition \eqref{ellitticita'sigma}.

\subsubsection{\normalsize{Green's function}}
 If $L$ is the operator given in \eqref{operatore conduttivita' misurabile}, then for every $y\in\Omega$, the Green's function $G(\cdot,y)$ is the weak solution to the Dirichlet problem

\setcounter{equation}{0}
\begin{equation}\label{GC}
\left\{
\begin{array}
{lcl} \mbox{div}(\sigma\nabla G(\cdot,y))=-\delta(\cdot - y)\ ,&
\mbox{in $\Omega$ ,}
\\
 G(\cdot,y)= 0\ ,   & \mbox{on $\partial\Omega$ ,}
\end{array}
\right.
\end{equation}

where $\delta(\cdot -y)$ is the Dirac measure at $y$. We recall
that $G$ satisfies the properties  (\cite{Lit-St-W})


\begin{equation}\label{simmetry G}
G(x,y)=G(y,x)\qquad\mbox{for}\:\mbox{every}\:x,y\in\Omega,\quad x\neq
y,
\end{equation}

\begin{eqnarray}\label{standardbeh}
0<G(x,y)<|x-y|^{2-n}\qquad\mbox{for}\:\mbox{every}\:x,y\in\Omega,\quad
x\neq y .
\end{eqnarray}

Moreover, the following result holds true.

\begin{proposition}\label{proposizione green function}
For any $y\in \Omega$ and every $r>0$ we have that
\begin{eqnarray}\label{caccio}
\int_{\Omega\setminus B_r(y)}|\nabla G(\cdot, y)|^2\le C r^{2-n} \
\end{eqnarray}
\end{proposition}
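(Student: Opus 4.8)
The plan is to use the Green's function equation \eqref{GC} together with the ellipticity bound \eqref{ellitticita'sigma} and the pointwise bound \eqref{standardbeh} in a Caccioppoli-type energy argument. Fix $y\in\Omega$ and $r>0$. First I would choose a cutoff function $\eta\in C^\infty_0(\mathbb R^n)$ with $\eta\equiv 0$ on $B_{r/2}(y)$, $\eta\equiv 1$ outside $B_r(y)$, $0\le\eta\le 1$ and $|\nabla\eta|\le C/r$. Since $G(\cdot,y)$ vanishes on $\partial\Omega$ and solves the homogeneous equation away from $y$, the function $\eta^2 G(\cdot,y)$ is an admissible test function in the weak formulation of \eqref{GC} on $\Omega\setminus B_{r/2}(y)$; because the support of $\nabla\eta$ avoids the singularity $y$, the Dirac mass on the right-hand side does not contribute. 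Testing the equation gives
\[
\int_{\Omega} \sigma\nabla G(\cdot,y)\cdot\nabla(\eta^2 G(\cdot,y)) = 0,
\]
and expanding $\nabla(\eta^2 G) = \eta^2\nabla G + 2\eta G\nabla\eta$ yields, after using the left inequality in \eqref{ellitticita'sigma} on the first term and Cauchy–Schwarz plus the right inequality on the cross term,
\[
\lambda^{-1}\int_{\Omega}\eta^2|\nabla G(\cdot,y)|^2 \le 2\lambda\int_{\Omega}\eta\,|G(\cdot,y)|\,|\nabla\eta|\,|\nabla G(\cdot,y)|.
\]

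Next I would absorb the gradient factor on the right into the left-hand side via Young's inequality ($ab\le \varepsilon a^2 + \tfrac{1}{4\varepsilon}b^2$ with $\varepsilon$ small depending on $\lambda$), obtaining
\[
\int_{\Omega}\eta^2|\nabla G(\cdot,y)|^2 \le C(\lambda)\int_{\Omega}|G(\cdot,y)|^2|\nabla\eta|^2 \le \frac{C(\lambda)}{r^2}\int_{B_r(y)\setminus B_{r/2}(y)}|G(\cdot,y)|^2.
\]
On the annulus $B_r(y)\setminus B_{r/2}(y)$ the standard pointwise bound \eqref{standardbeh} gives $|G(\cdot,y)|\le |x-y|^{2-n}\le (r/2)^{2-n}\le C r^{2-n}$, while the annulus has measure at most $C r^n$. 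Hence the right-hand side is bounded by $C r^{-2}\cdot r^{2(2-n)}\cdot r^{n} = C r^{2-n}$. Since $\eta\equiv 1$ on $\Omega\setminus B_r(y)$, we conclude $\int_{\Omega\setminus B_r(y)}|\nabla G(\cdot,y)|^2 \le C r^{2-n}$ with $C$ depending only on $n$ and $\lambda$, which is \eqref{caccio}. (If $r$ is larger than the diameter of $\Omega$ the statement is vacuous; if $B_{r/2}(y)$ pokes outside $\Omega$ one simply restricts all integrals to $\Omega$, which only helps.)

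The only delicate point is the justification that $\eta^2 G(\cdot,y)$ is a legitimate test function: $G(\cdot,y)$ is not in $H^1$ near $y$ in dimension $n\ge 3$, so one must argue on $\Omega\setminus \overline{B_{\rho}(y)}$ for $\rho<r/2$, note that $G(\cdot,y)\in H^1(\Omega\setminus B_\rho(y))$ by interior regularity, use that $\eta$ is supported away from $B_{r/2}(y)\supset B_\rho(y)$ so no boundary term appears on $\partial B_\rho(y)$, and that $G$ vanishes on $\partial\Omega$ so no boundary term appears there either. This is the main obstacle only in the sense of bookkeeping; the energy estimate itself is the classical Caccioppoli inequality and carries no hidden difficulty.
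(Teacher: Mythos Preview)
Your proposal is correct and follows exactly the approach the paper indicates: the paper's proof is a one-line reference to ``combining Caccioppoli inequality with \eqref{standardbeh}'' and to \cite{A-V}, Proposition~3.1, and what you have written is precisely a careful unpacking of that Caccioppoli-plus-pointwise-bound argument.
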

where $C>0$ depends on $\lambda$ and $n$ only.
\begin{proof}
The proof can be obtained by combining Caccioppoli inequality with
\eqref{standardbeh} (\cite{A-V}, Proposition 3.1).
\end{proof}

\subsubsection{\normalsize{Integral solutions of $L$}}

Let $\sigma^{(i)}$, $i=1,2$ be two measurable matrix valued
functions satisfying the ellipticity condition
\eqref{ellitticita'sigma} and let $G_{i}(x,y)$ be the Green's
functions associated to the operators

\begin{equation}\label{operatori Li}
L_i =
\mbox{div}\left(\sigma^{(i)}(x)\nabla\right)\qquad\mbox{in}\quad\Omega,\quad
i=1,2.
\end{equation}

Let $\mathcal{U}$ be an open subset of $\Omega$ and
$\mathcal{W}=\Omega\setminus\overline{\mathcal{U}}$. For any
$y,z\in\mathcal{W}$ we define

\begin{eqnarray}
S_{\mathcal{U}}({y},z)=\int_{\mathcal{U}}(\sigma^{(1)}(x)-\sigma^{(2)}(x))\nabla_x
G_1(x,{y})\cdot\nabla_x G_2(z,x)dx .
\end{eqnarray}

\begin{remark}
\begin{equation}\label{remark on SU}
|S_{\mathcal{U}}({y},z)|\leq C
||\sigma^{(1)}-\sigma^{(2)}||_{L^{\infty}(\Omega)}\left(d(y)d(z)\right)^{1-\frac{n}{2}},
\quad\mbox{for\:every}\:y,z\in\mathcal{W},
\end{equation}

where $d(y)=dist(y,\mathcal{U})$ and $C$ is a positive constant
depending on $\lambda, \bar{\gamma}$ and $n$ only.
\end{remark}

Observe that \eqref{remark on SU} is a straightforward consequence
of H\"older inequality and Proposition \ref{proposizione green
function}. We constructed in this way an integral function
$S_{\mathcal{U}}(\cdot,\cdot)$ on $\mathcal{W}\times\mathcal{W}$,
which is written in terms of the two Green's functions $G_1(\cdot,
y)$, $G_2(\cdot, z)$ of $L_1$, $L_2$ respectively;
$S_{\mathcal{U}}(\cdot,z)$, $S_{\mathcal{U}}(y,\cdot)$ are in turn
solutions for $L_1$, $L_2$ respectively on the complement part of
$\mathcal{U}$ in $\Omega$. More precisely we have

\begin{theorem}\label{teorema soluzioni SU}
For every $y,z\in W$ we have that $S_{\mathcal{U}}(\cdot,z),
S_{\mathcal{{U}}}(y,\cdot)\in H^1_{loc}(W)$ are weak solutions to
\begin{eqnarray}
\quad\textnormal{div} \left(\sigma^{(1)}(\cdot)\nabla
S_{\mathcal{U}}(\cdot,z)\right)=0 \ ,\ \textnormal{div}
\left(\sigma^{(2)}(\cdot) \nabla S_{\mathcal{U}}(y,\cdot)\right)=0 \
,\quad\mbox{in}\:\mathcal{W}.
\end{eqnarray}

\end{theorem}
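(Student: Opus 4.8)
The plan is first to reduce, using the symmetry \eqref{simmetry G} of the Green's functions and the symmetric roles played by $\sigma^{(1)}$ and $\sigma^{(2)}$ (exchanging $y\leftrightarrow z$ and $1\leftrightarrow2$, and absorbing an overall sign), to the single assertion that for fixed $z\in\mathcal{W}$ the function $s(y):=S_{\mathcal U}(y,z)$ lies in $H^1_{loc}(\mathcal{W})$ and is a weak solution of $\mathrm{div}(\sigma^{(1)}\nabla s)=0$ in $\mathcal{W}$. Setting $\Phi(x):=\chi_{\mathcal U}(x)\bigl(\sigma^{(1)}(x)-\sigma^{(2)}(x)\bigr)\nabla_x G_2(z,x)$ and using $G_2(z,x)=G_2(x,z)$, the bound \eqref{standardbeh}, Proposition \ref{proposizione green function} (with radius comparable to $\mathrm{dist}(z,\mathcal U)>0$) and $\sigma^{(i)}\in L^\infty$, one checks that $\Phi\in L^2(\Omega;\mathbb{R}^n)$, $\mathrm{supp}\,\Phi\subseteq\overline{\mathcal U}$, and $s(y)=\int_\Omega\nabla_x G_1(x,y)\cdot\Phi(x)\,dx$ for $y\in\mathcal{W}$.

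The core of the proof is the Green's representation identity $s=w$ on $\mathcal{W}$, where $w\in H^1_0(\Omega)$ is the Lax--Milgram solution of $\mathrm{div}(\sigma^{(1)}\nabla w)=\mathrm{div}\,\Phi$ in $\Omega$, $w=0$ on $\partial\Omega$. Granting this, the conclusion is immediate: $w\in H^1(\Omega)\subseteq H^1_{loc}(\mathcal{W})$, and since $\mathrm{supp}\,\Phi\subseteq\overline{\mathcal U}$ we have $\mathrm{div}(\sigma^{(1)}\nabla w)=0$ in $\mathcal{W}$, so $w$ — hence $s$ — is a weak solution of $\mathrm{div}(\sigma^{(1)}\nabla\,\cdot\,)=0$ there, the $L_1$-harmonic (hence Hölder continuous) $w$ furnishing the representative. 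I expect the identity $s=w$ to be the main obstacle: with merely measurable $\sigma^{(i)}$ the function $G_1(\cdot,y)$ is not in $H^1_0(\Omega)$ (it fails to be $H^1$ near its pole $y$), so one cannot use it directly as a test function. The remedy is a cutoff near $y$: fix $y\in\mathcal{W}$, and for $0<2\rho<\mathrm{dist}(y,\overline{\mathcal U})$ use $\zeta_\rho G_1(\cdot,y)\in H^1_0(\Omega)$, where $\zeta_\rho\in C_c^\infty(\Omega)$ vanishes on $B_\rho(y)$ and equals $1$ outside $B_{2\rho}(y)$, as a test function in the equation for $w$; the right-hand side is then exactly $\int_\Omega\Phi\cdot\nabla_x G_1(x,y)\,dx=s(y)$ (as $\Phi$ lives where $\zeta_\rho\equiv1$), the terms carrying $\nabla\zeta_\rho$ are supported on the annulus $B_{2\rho}(y)\setminus B_\rho(y)$ and are $O(\rho^\alpha)$ by \eqref{standardbeh} and Caccioppoli's inequality for $w$ (which is $L_1$-harmonic on $B_{2\rho}(y)$), and what survives as $\rho\to0$ is the reproducing property $\int_\Omega\sigma^{(1)}\nabla_x G_1(\cdot,y)\cdot\nabla w\,dx=w(y)$ of the Green's function (understood, as is standard for non-smooth coefficients, in the principal-value sense, via one integration by parts on $\Omega\setminus B_\rho(y)$ where $G_1(\cdot,y)$ solves $\mathrm{div}(\sigma^{(1)}\nabla\,\cdot\,)=0$). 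Passing to the limit yields $s(y)=w(y)$.

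Finally, the statement about $S_{\mathcal U}(y,\cdot)$ follows from the reduction noted at the start: writing $S_{\mathcal U}(y,z)=-\widetilde S_{\mathcal U}(z,y)$, where $\widetilde S_{\mathcal U}$ is built from $S_{\mathcal U}$ by interchanging $\sigma^{(1)}\leftrightarrow\sigma^{(2)}$ and $G_1\leftrightarrow G_2$, and applying the case already proved to $\widetilde S_{\mathcal U}(\cdot,y)$, one gets that $z\mapsto S_{\mathcal U}(y,z)$ is an $H^1_{loc}(\mathcal{W})$ weak solution of $\mathrm{div}(\sigma^{(2)}\nabla\,\cdot\,)=0$ in $\mathcal{W}$. (An alternative, more in the spirit of the rest of the paper, is to observe that differentiating $\mathrm{div}_x(\sigma^{(1)}(x)\nabla_x G_1(x,y))=-\delta(x-y)$ in the source variable shows that, for each fixed $x\in\mathcal U$, the vector field $y\mapsto\nabla_x G_1(x,y)$ solves $\mathrm{div}(\sigma^{(1)}\nabla\,\cdot\,)=0$ in $\mathcal{W}$, so that $s$ is a superposition of such solutions; one then passes the operator through $\int_{\mathcal U}$ using Fubini and the uniform gradient estimates of Proposition \ref{proposizione green function}, the delicate point being to legitimise the source-differentiation of $G_1$ with non-smooth coefficients, e.g.\ by difference quotients.)
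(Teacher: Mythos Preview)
Your argument is correct and in fact more carefully justified than the paper's own proof, which consists of the single sentence ``The proof relies on differentiation under the integral sign arguments and the symmetry of $G_i$, $i=1,2$.'' The paper's intended route is precisely the alternative you sketch at the end: use $G_1(x,y)=G_1(y,x)$ to regard, for each fixed $x\in\mathcal U$, the map $y\mapsto \nabla_x G_1(x,y)$ as a parameter--derivative of an $L_1$--harmonic function on $\mathcal W$, and then pass the operator $L_1$ through the $dx$--integral by Fubini and the uniform energy bounds \eqref{caccio}. Your main approach is genuinely different: you identify $s=S_{\mathcal U}(\cdot,z)$ with the Lax--Milgram solution $w\in H^1_0(\Omega)$ of $\mathrm{div}(\sigma^{(1)}\nabla w)=\mathrm{div}\,\Phi$ via a cutoff argument and the reproducing property of $G_1$, and read off the conclusion from $\mathrm{supp}\,\Phi\subset\overline{\mathcal U}$. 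This buys you a completely rigorous proof that never needs to make sense of $\partial_{x_i}G_1(x,y)$ pointwise or to justify source--differentiation with merely bounded measurable coefficients (the delicate point you correctly flag), at the cost of a slightly longer computation. The paper's route is shorter to state but, taken literally, leaves exactly that justification to the reader; your version could serve as a self-contained replacement.
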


\begin{proof}
The proof relies on differentiation under the integral sign arguments and the symmetry of $G_i, i=1,2$.

\end{proof}

\subsection{\normalsize{Conductivity $\sigma\in\mathcal{C}$}}



We shall denote with
\begin{eqnarray}\label{GC2}
\Gamma(x,y)=\frac{1}{(n-2)\omega_n}|x-y|^{2-n},
\end{eqnarray}
the fundamental solution of the Laplace operator (here $\omega _n/n$ denotes the volume of the unit ball in $\mathbb{R}^n$). If $D_i$, $i=1,\dots , N$ are the domains introduced in section
\ref{subsec assumption domain} and $L$ is the operator given by
\eqref{operatore conduttivita' misurabile}, with
$\sigma\in\mathcal{C}$, we shall give asymptotic estimates for the Green's
function of $L$, with respect to
\eqref{GC2} at the interfaces between the domains $D_i$, $i=1,\dots N$.
These estimates are given below. In what follows let $G$ be the Green's function associated to
the operator $L$ in $\Omega$.


\subsubsection{\normalsize{Green's function}}\label{subsection Green function}

\begin{theorem}\label{teorema stime asintotiche}({\bf{Asymptotic estimates}})
For every $l\in\{1, \dots, K-1 \}$, let $\nu(P_{l+1})$ denote the
unit exterior normal to $D_{j_{l+1}}$ at the point $P_{l+1}$.
There exist constants $\beta\in (0,\alpha)$ and $\bar{C}>1$ depending
on $\bar{\gamma}, \lambda, M, \alpha$ and $n$ only such that the following inequalities hold
true for every $\bar{x}\in B_{\frac{r_0}{\bar{C}}}(P_{l+1})\cap
D_{j_{l+1}}$ and every $\bar{y}=P_{l+1}+r\nu(P_{l+1})$, where $r\in
(0,\frac{r_0}{\bar{C}})$
\begin{eqnarray}\label{asyfun}
\left|G(\bar{x},\bar{y}) -
\frac{2}{\gamma_{j_l}+\gamma_{j_l+1}}\Gamma(J(\bar{x}),J(\bar{y}))\right|\le\frac{\bar{C}}{r_0^{\beta}}|\bar{x}-\bar{y}|^{\beta+2-n}
\ \ ,
\end{eqnarray}
\begin{eqnarray}\label{asygra}
\ \  \ \ \ \left|\nabla_x G(\bar{x},\bar{y}) -
\frac{2}{\gamma_{j_l}+\gamma_{j_l+1}}\nabla_x\Gamma(J(\bar{x}),J(\bar{y}))\right|\le\frac{\bar{C}}{r_0^{\beta}}|\bar{x}-\bar{y}|^{\beta+1-n}
\ \ ,
\end{eqnarray}

where $J$ is the positive definite matrix such that
$J=\sqrt{A(0)^{-1}}$\ .
\end{theorem}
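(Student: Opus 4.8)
The plan is to reduce the problem to a two-phase configuration with a flat interface by a change of coordinates, and then to compare the Green's function of the resulting conductivity equation with the fundamental solution of a piecewise-constant isotropic equation for which an explicit formula is available. First I would freeze the coefficient matrix at the interface point $P_{l+1}$, which after the rigid transformation of Section \ref{subsec assumption domain} we take to be the origin, and introduce the linear change of variables $z = J^{-1}x$ with $J = \sqrt{A(0)^{-1}}$; this is the natural substitution because it transforms $\mathrm{div}(A(0)\nabla\,\cdot\,)$ into the Laplacian. Since $A$ is Lipschitz (assumption \eqref{aprioria}), in a ball of radius comparable to $r_0/\bar C$ we have $|A(x)-A(0)| \le \bar A|x|$, so in the new coordinates the operator $L$ becomes $\mathrm{div}(\tilde\sigma(z)\nabla\,\cdot\,)$ where $\tilde\sigma$ is a Lipschitz perturbation of the two-valued conductivity that equals $\gamma_{j_l}I$ on one side of the (now flat, since $|\nabla\phi_{l+1}(0)|=0$ the interface is tangent to $\{z_n=0\}$ up to a $C^{1,\alpha}$ error) interface and $\gamma_{j_l+1}I$ on the other.

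Second, for the model two-phase isotropic problem with genuinely flat interface and constant values $\gamma_{j_l}$, $\gamma_{j_l+1}$, I would write down the explicit fundamental solution by the classical reflection/transmission method: it is a combination of $\Gamma(z,w)$ and its mirror image $\Gamma(z,w^*)$ with coefficients determined by the jump conditions, and in particular when both source and observation point sit appropriately the leading term is exactly $\frac{2}{\gamma_{j_l}+\gamma_{j_l+1}}\Gamma(z,w)$ — this explains the constant appearing in \eqref{asyfun}. Call this model solution $G_0$. The target estimate then amounts to bounding $G - (\text{pullback of }G_0)$, i.e. controlling the error generated by (i) the $C^{1,\alpha}$ curvature of the true interface versus the flat model interface, (ii) the Lipschitz variation of $A(x)$ away from $A(0)$, and (iii) the presence of the other subdomains $D_j$ and the boundary $\partial\Omega$, which are all at distance $\gtrsim r_0/\bar C$ from $P_{l+1}$ and hence contribute only lower-order terms by the interior/boundary decay estimates of Proposition \ref{proposizione green function} together with \eqref{standardbeh}.

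Third, to turn these qualitative remarks into the quantitative bound with the gain of $|\bar x-\bar y|^{\beta}/r_0^{\beta}$, I would set up the difference $w := G(\cdot,\bar y) - c\,\Gamma(J\cdot,J\bar y)$ (with $c = \frac{2}{\gamma_{j_l}+\gamma_{j_l+1}}$), observe that $Lw$ is supported away from the singularity and is a source of size controlled by the $C^{1,\alpha}$ and Lipschitz data, and then invoke local Schauder/De Giorgi–Nash–Moser estimates for the divergence-form equation with piecewise-$C^{0,1}$ (but across a $C^{1,\alpha}$ interface) coefficients — this yields Hölder continuity of $w$ and of $\nabla w$ on each side with a uniformly controlled Hölder exponent $\beta\in(0,\alpha)$ and norm $\bar C/r_0^\beta$, which is precisely \eqref{asyfun}–\eqref{asygra}. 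A scaling/dimensional-analysis argument fixes the homogeneity $|\bar x-\bar y|^{\beta+2-n}$ and $|\bar x-\bar y|^{\beta+1-n}$ in the two estimates, and a covering argument patches the local bounds along the chain $\Sigma_1,\dots,\Sigma_K$ from \eqref{catena dominii}.

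The main obstacle I expect is step three: obtaining \emph{uniform} Hölder-regularity constants for solutions of the divergence-form equation whose coefficient jumps across a curved ($C^{1,\alpha}$) interface while being only Lipschitz on either side. One cannot appeal to classical Schauder theory directly because the coefficient is discontinuous globally; the right tool is the transmission-problem regularity theory (flattening the interface costs a $C^{1,\alpha}$ change of variables, which only degrades Lipschitz coefficients to $C^{0,\alpha}$ — still enough for $C^{1,\beta}$ interior estimates on each side for some $\beta<\alpha$), but tracking that all constants depend only on the a-priori data $N,r_0,L,M,\alpha,\lambda,\bar\gamma,\bar A,n$, and not on the particular $\gamma_j^{(i)}$ or on $\bar x,\bar y$, is the delicate bookkeeping. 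The explicit two-phase fundamental solution computation is elementary by comparison, and the far-field contributions are routine given Proposition \ref{proposizione green function}.
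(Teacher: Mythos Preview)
Your strategy is essentially the paper's --- freeze coefficients, compare with the explicit two-phase half-space fundamental solution, and control the remainder via transmission regularity (the paper invokes Li--Vogelius for precisely this purpose) --- and you correctly identify both the constant $\tfrac{2}{\gamma_{j_l}+\gamma_{j_{l+1}}}$ and the main technical obstacle.

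There is, however, a real gap in step three. In step two you correctly set the target as $G-(\text{pullback of }G_0)$ with $G_0$ the full two-phase model solution, but in step three you replace this by $w = G(\cdot,\bar y) - c\,\Gamma(J\cdot,J\bar y)$ and claim that $Lw$ is supported away from the singularity. It is not: near $\bar y\in D_{j_l}$ one has $L\big(c\,\Gamma(J\cdot,J\bar y)\big) = -c\gamma_{j_l}\,\delta_{\bar y}+\text{l.o.t.}$ (up to a determinant factor), and $c\gamma_{j_l}=\tfrac{2\gamma_{j_l}}{\gamma_{j_l}+\gamma_{j_{l+1}}}\neq 1$ in general, so the Dirac masses do not cancel. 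The function $c\,\Gamma$ agrees with your $G_0$ only in the transmitted region $\xi_n\eta_n<0$; as a global distribution it has the wrong singularity strength, and you cannot run a PDE/Schauder argument on $w$ as written. The paper handles this by a two-layer comparison, after first flattening the interface by a nonlinear $C^{1,\alpha}$ diffeomorphism $\Phi$: (i) compare the transformed $\tilde G$ with the Green function $G_0$ of the \emph{frozen} operator $\mathrm{div}\big((1+(k-1)\chi^+)A(0)\nabla\,\cdot\,\big)$ on $\Omega$ --- same $\delta$, so $R=\tilde G-G_0$ solves $\mathrm{div}(\tilde\sigma\nabla R)=-\mathrm{div}\big((\tilde\sigma-\sigma_{0,0})\nabla G_0\big)$, which is estimated directly via the representation formula using $|\tilde\sigma-\sigma_{0,0}|\le C|\xi|^\alpha$ and the Li--Vogelius pointwise bound $|\nabla G_0|\le C|\cdot-\eta|^{1-n}$, followed by an interpolation inequality to get the gradient bound; (ii) compare $G_0$ with the explicit half-space solution $H$ by the maximum principle (both carry the same $\delta$, so their difference is a genuine solution). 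Two smaller points: your linear change $J$ alone neither flattens $\Sigma_{l+1}$ nor preserves the normal direction $e_n$ --- the paper composes $\sqrt{A(0)^{-1}}$ with a suitable rotation when $A(0)\neq I$; and no covering along the chain $\Sigma_1,\dots,\Sigma_K$ is needed here --- the theorem is purely local at a single interface, the chain appears only in Proposition~\ref{proposizione unique continuation finale}.
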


\subsubsection{\normalsize{Integral solutions of $L$: unique continuation}}

We recall that
up to a rigid transformation of coordinates we can assume that

\[P_1=0\qquad ;\qquad (\mathbb{R}^n\setminus\Omega)\cap B_{r_0}=\{(x^{\prime},x_n)\in B_{r_{0}}\:|\:x_n <\varphi(x^{\prime})\},\]

where $\varphi$ is a Lipschitz function such that

\[\varphi(0)=0\qquad\textnormal{and}\qquad ||\varphi||_{C^{0,1}(B_{r_{0}}^{\prime})}\leq Lr_0.\]

Denoting by

\[D_0=\left\{x\in(\mathbb{R}^n\setminus\Omega)\cap B_{r_0}\:\bigg|\:|x_i|<\frac{2}{3}r_0,\:i=1,\dots , n-1,\:\left|x_n-\frac{r_0}{6}\right|<\frac{5}{6}r_0\right\},\]

it turns out that the augmented domain $\Omega_0=\Omega\cup D_0$
is of Lipschitz class with constants $\frac{r_0}{3}$ and
$\tilde{L}$, where $\tilde{L}$ depends on $L$ only. We consider
the operator $L_i$ given by \eqref{operatori Li} and extend
$\sigma^{(i)}\in\mathcal{C}$ to $\tilde{\sigma}^{(i)}=\tilde{\gamma}^{(i)}\:\tilde{A}$ on $\Omega_0$, by
setting $\tilde{\gamma}^{(i)}|_{D_0}=1$, and extending $A$ to $\tilde{A}\in C^{0,1}(\Omega_0)$ with Lipschitz constant $L$, for $i=1,2$. We denote by
$\tilde{G}_i$ the Green function associated to
$\tilde{L_i}=\textnormal{div}(\tilde\sigma^{(i)}(x)\nabla\cdot)$ in $\Omega_0$,
for $i=1,2$. For any number $r\in \left(0,\frac{2}{3}r_0\right)$
we also denote

\[(D_0)_r = \left\{x\in D_0\:|\:dist(x,\Omega)>r\right\}.\]

Let us fix $k\in\{2,\dots N\}$ and recall that there exist
$j_1,\dots j_K \in\{1,\dots N\}$ such that

\[D_{j_1}=D_1,\dots D_{j_K}=D_k.\]

We denote

\[\mathcal{W}_K = \bigcup_{i=0}^{K}D_{j_i},\qquad \mathcal{U}_k = \Omega_0\setminus\overline{\mathcal{W}_K},\quad\textnormal{when}\:k\geq 0\]

($D_{j_0}=D_0$) and for any $y,z\in\mathcal{W}_K$

\[\tilde{S}_{\mathcal{U}_K}(y,z)=\int_{\mathcal{U}_K}(\tilde\sigma^{(1)}_A-\tilde\sigma^{(2)}_A)\nabla\tilde{ G}_1(\cdot,y)\cdot\nabla\tilde{G}_2(\cdot,z),\quad \textnormal{when}\:k\geq 0.\]

We introduce for any number $b>0$ as in \cite{A-V}, the concave
non decreasing function $\omega_{b}(t)$, defined on $(0,+\infty)$,

\begin{displaymath}
\omega_{b}(t)=\left\{ \begin{array}{ll} 2^{b}e^{-2}|\log t|^{-b},
&\quad
t\in (0,e^{-2}),\\
e^{-2}, &\quad t\in[e^{-2},+\infty)
\end{array} \right.
\end{displaymath}

and denote

\[\omega_{b}^{(1)}=\omega,\qquad \omega_{b}^{(j)}=\omega_{b}\circ \omega_{b}^{(j-1)}.\]

The following parameters shall also be introduced

\begin{eqnarray*}
& &\beta=\arctan{\frac{1}{L}},\quad\beta_1 = \arctan{\left(\frac{\sin\beta}{4}\right)},\quad\lambda_1=\frac{r_0}{1+\sin\beta_1}\\
& & \rho_1=\lambda_1\sin\beta_1,\quad a=\frac{1-\sin\beta_1}{1+\sin\beta_1}\\
& & \lambda_k=a\lambda_{k-1},\quad \rho_k = a\rho_{k-1},\quad\textnormal{for\:every}\:k\geq 2,\\
& & d_k=\lambda_k-\rho_k,\quad k\geq 1.
\end{eqnarray*}

Let us denote here and in the sequel

\[E=||\sigma^{(1)}_{A}-\sigma^{(2)}_{A}||_{L^{\infty}(\Omega)}.\]

The following estimate for
$\tilde{S}_{\mathcal{U}_K}(y,z)$ holds true.

\begin{proposition}\label{proposizione unique continuation finale}({\bf{Estimates of unique continuation}})
If, for a positive number $\varepsilon_0$, we have

\begin{equation}\label{ip S tilde}
\left|\tilde{S}_{\mathcal{U}_K}(y,z)\right|\leq
r_0^{2-n}\varepsilon_0,\quad for\:every\: (y,z)\in
(D_0)_{\frac{r_0}{3}}\times(D_0)_{\frac{r_0}{3}},
\end{equation}

then the following inequality holds true for every $r\in (0,d_1]$

\begin{equation}
\left|\tilde{S}_{\mathcal{U}_K}\left(w_{\bar{h}}(P_{K+1}),w_{\bar{h}}(P_{K+1})\right)\right|
\leq
r_0^{2-n}C^{\bar{h}}(E+\varepsilon_0)\left(\omega_{1/C}^{(2K)}\left(\frac{\varepsilon_0}{E+\varepsilon_0}\right)\right)
^{\left(1/C\right)^{\bar{h}}},
\end{equation}

where $P_{K+1}\in\Sigma_{K+1}$,
$\bar{h}=min\{k\in\mathbb{N}\:|\:d_k\leq r\}$,
$w_{\bar{h}}(P_{K+1})=P_{K+1}-\lambda_{\bar{h}}\nu(P_{K+1})$,
$\nu$ is the exterior unit normal to $\partial{D}_K$ and $C\geq 1$
depends on the a-priori data only.

\end{proposition}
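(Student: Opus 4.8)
The plan is to prove Proposition~\ref{proposizione unique continuation finale} by an iterated three-spheres / propagation-of-smallness argument along the chain of subdomains $D_{j_0}, D_{j_1}, \dots, D_{j_{K+1}}$, transferring the smallness of $\tilde S_{\mathcal{U}_K}$ from the starting region $(D_0)_{r_0/3}$ step by step toward the interface point $P_{K+1}$. First I would recall, from Theorem~\ref{teorema soluzioni SU} (applied in the augmented domain $\Omega_0$), that $\tilde S_{\mathcal{U}_K}(\cdot,z)$ and $\tilde S_{\mathcal{U}_K}(y,\cdot)$ are weak solutions of the uniformly elliptic equations $\mathrm{div}(\tilde\sigma^{(1)}\nabla\,\cdot\,)=0$ and $\mathrm{div}(\tilde\sigma^{(2)}\nabla\,\cdot\,)=0$ respectively on $\mathcal{W}_K$, so in particular they are solutions in a neighbourhood of the whole chain of $C^{1,\alpha}$ interfaces joining $D_0$ to $D_{j_K}$. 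The conductivities $\tilde\sigma^{(i)}$ are piecewise-Lipschitz with a uniform ellipticity constant (depending only on the a-priori data), so the interior and boundary Lipschitz propagation-of-smallness estimates apply with constants in the a-priori class.

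The core mechanism is the quantitative estimate of continuation from the interior, exactly as in \cite{A-V}: this is encapsulated in Proposition~\ref{proposizione pre unique continuation} (stated later in the paper), whose proof is insensitive to the anisotropic factor $A(x)$ because $A$ is Lipschitz and the equation remains uniformly elliptic with piecewise-Lipschitz coefficients. Concretely I would: (i) use the hypothesis \eqref{ip S tilde} together with the quantitative Cauchy estimate \eqref{remark on SU} bounding $|\tilde S_{\mathcal{U}_K}|$ by $C E$ on the full ``good'' region away from $\mathcal{U}_K$, to get a bound of the form $|\tilde S_{\mathcal{U}_K}|\le r_0^{2-n}(E+\varepsilon_0)\,\omega_{1/C}(\varepsilon_0/(E+\varepsilon_0))$ on a ball inside $D_{j_1}$ near the interface $\Sigma_1$; (ii) iterate across each successive interface $\Sigma_2,\dots,\Sigma_K$ — at each crossing one loses one more composition of the logarithmic modulus $\omega_{1/C}$ (two per interface, accounting for the two arguments $y$ and $z$, which is why $\omega^{(2K)}$ appears) and picks up a multiplicative constant depending only on the a-priori data; (iii) along the final leg inside $D_{j_K}=D_K$, from a fixed interior ball up to the moving point $w_{\bar h}(P_{K+1})=P_{K+1}-\lambda_{\bar h}\nu(P_{K+1})$ which is at distance $\rho_{\bar h}$ from $\partial D_K$, chain together $\bar h$ three-spheres inequalities. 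Each such inequality costs a fixed factor and a fixed exponent $\theta\in(0,1)$, so after $\bar h$ steps the accumulated constant is $C^{\bar h}$ and the accumulated exponent is $\theta^{\bar h}=(1/C)^{\bar h}$ (choosing $C$ large enough), yielding precisely the claimed bound. The choice of the geometric sequences $\lambda_k=a\lambda_{k-1}$, $\rho_k=a\rho_{k-1}$ and the angles $\beta,\beta_1$ guarantees that consecutive balls $B_{\rho_k}(w_k)$ overlap correctly and remain inside the Lipschitz cone at $P_{K+1}$, so that $\bar h=\min\{k:d_k\le r\}$ is the correct number of iterations and the radii never degenerate.

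The main obstacle is genuinely the first part of step (ii): the quantitative jump across each $C^{1,\alpha}$ interface $\Sigma_k$, where $\tilde S_{\mathcal{U}_K}(\cdot,z)$ solves one equation on one side and a different equation on the other, but is only known to be a single $H^1_{loc}$ function across the interface with no a-priori transmission condition linking the normal derivatives. One must show that smallness of the solution (and its gradient) on one side, in a flattened half-cylinder $Q^-_{r_0/3}$, propagates to smallness in $Q^+_{r_0/3}$ on the other side. This is handled by a doubling/three-spheres argument adapted to the interface — using the $C^{1,\alpha}$ regularity of $\Sigma_k$ to flatten it, the global $H^1$-regularity of $S$ across $\Sigma_k$ to match Cauchy data, and a Carleman-type or frequency-function estimate for the transmission problem — and it is exactly here that the structure $\tilde\sigma^{(i)}=\tilde\gamma^{(i)}\tilde A$ with $\tilde A$ Lipschitz is used: the anisotropy does not change the ellipticity class, so the argument of \cite{A-V} carries over verbatim at the level of the coefficients. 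I would therefore state this interface-crossing estimate as Proposition~\ref{proposizione pre unique continuation}, note that its proof is independent of the anisotropic factor and refer to \cite{A-V} for the details, and then present the bookkeeping of steps (i)--(iii) — tracking the number of compositions of $\omega_{1/C}$, the powers of the fixed constant $C$, and the exponents $(1/C)^{\bar h}$ — as the new, routine-but-careful part of the argument, using \eqref{remark on SU} and Proposition~\ref{proposizione green function} to control all the error terms uniformly in terms of the a-priori data.
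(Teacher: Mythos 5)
Your proposal is correct and follows essentially the same route as the paper: Proposition~\ref{proposizione unique continuation finale} is reduced to the single-solution unique-continuation result Proposition~\ref{proposizione pre unique continuation} (applied once in each argument $y$ and $z$ of $\tilde S_{\mathcal{U}_K}$, which is why $\omega^{(2K)}$ rather than $\omega^{(K)}$ appears), which in turn is proved by global propagation of smallness from $(D_0)_{r_0/3}$ followed by an inductive chain of three-sphere inequalities across the $C^{1,\alpha}$ interfaces and a final $\bar h$-fold iteration inside $D_K$, exactly as in \cite{A-V}, with the key observation that the Lipschitz anisotropic factor $A(x)$ does not alter the uniform ellipticity class or the piecewise-Lipschitz/$C^{1,\alpha}$-interface structure on which the unique-continuation machinery relies. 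Your bookkeeping of the constant $C^{\bar h}$, the exponent $(1/C)^{\bar h}$, and the number of compositions of $\omega_{1/C}$ agrees with the paper's argument.
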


\subsection{\normalsize{Proof of Theorem \ref{teorema principale}}}

\begin{proof}{\textit{Proof of Theorem \ref{teorema principale}.}} We denote by $\Lambda_i$ the map
$\Lambda^{(\Sigma)}_{\sigma^{(i)}_A}$, for $i=1,2$ and, for every
$k\in\{0,\dots , K\}$, the subscript $j_k$ will be replaced by
$k$. This should simplify the notation. Let us point out that

\[||(\sigma_{A}^{(1)}-\sigma_{A}^{(2)})||_{L^{\infty}(\Omega)}\leq
\bar{A}||\gamma^{(1)}-\gamma^{(2)}||_{L^{\infty}(\Omega)},\]

where

\[\gamma^{(i)}=\sum_{j=1}^{N} \gamma_{j}^{(i)}\chi_{D_j}(x),\qquad i=1,2,\]

therefore \eqref{stabilita' globale} trivially follows from

\begin{equation}\label{stabilita'globale gamma}
||\gamma^{(1)}-\gamma^{(2)}||_{L^{\infty}(\Omega)}\leq C||\Lambda_1
-
\Lambda_2||_{\mathcal{L}(H^{1/2}_{co}(\Sigma),H^{-1/2}_{co}(\Sigma))}
\end{equation}

which we shall prove. Moreover we shall denote

\[\varepsilon=||\Lambda_1 - \Lambda_2||,\qquad \delta_k=||\tilde\gamma^{(1)}-\tilde\gamma^{(2)}||_{L^{\infty}(\mathcal{W}_k)}.\]

We start by recalling that for every $y,z\in
D_0$ we have

\[\left<(\Lambda_1 - \Lambda_2)\tilde{G}_1(\cdot,y),\tilde{G}_2(\cdot,z)\right>
=\int_{\Omega}(\tilde\gamma^{(1)}-\tilde\gamma^{(2)})A(\cdot)\nabla\tilde{G}_1(\cdot,y)\cdot\nabla\tilde{G}_2(\cdot,z)\]

and that, for every $k\in\{1,\dots K\}$,

\[\tilde{S}_{\mathcal{U}_{k-1}}(y,z)=\int_{\mathcal{U}_{k-1}}
(\tilde\gamma^{(1)}-\tilde\gamma^{(2)})\tilde{A}(\cdot)
\nabla\tilde{G}_1(\cdot,y)\cdot\nabla\tilde{G}_2(\cdot,z),\]

therefore

\begin{eqnarray}\label{stima S}
|\tilde{S}_{\mathcal{U}_{k-1}}(y,z)| &\leq & \varepsilon
||\tilde{G}_1(\cdot,y)||_{H^{1/2}_{co}(\Sigma)}||\tilde{G}_2(\cdot,z)||_{H^{1/2}_{co}(\Sigma)}\nonumber\\
&+&\delta_{k-1}\bar{A}||\nabla\tilde{G}_1(\cdot,y)||_{L^2(\mathcal{W}_{k-1})}
||\nabla\tilde{G}_2(\cdot,z)||_{L^2(\mathcal{W}_{k-1})}\nonumber\\
&\leq & C (\varepsilon+\delta_{k-1})r_0^{2-n},\qquad
\textnormal{for\:every}\:y,z\in(D_0)_{r_0/3},
\end{eqnarray}

where $C$ depends on $A$, $L$, $\lambda$, $\bar{A}$ and $n$. Let $\rho_0=\frac{r_0}{\bar{C}}$, where $\bar{C}$ is the constant introduced in Theorem \ref{teorema stime asintotiche}, let $r\in(0,d_2)$ and denote

\[w=P_k+\sigma\nu,\qquad\textnormal{where}\:\sigma=a^{\bar{h}-1}\lambda_1,\]


then

\begin{equation}\label{S=I1+I2}
\tilde{S}_{\mathcal{U}_{k-1}}(\omega,\omega)=I_1(\omega)+I_2(\omega),
\end{equation}

where

\[I_1(\omega)=\int_{B_{\rho_0}(P_k)\cap D_k}(\gamma^{(1)}-\gamma^{(2)})A(\cdot)\nabla\tilde{G}_1(\cdot,\omega)\cdot
\nabla\tilde{G}_1(\cdot,\omega),\]

\[I_2(\omega)=\int_{\mathcal{U}_{k-1}\setminus (B_{\rho_0}(P_k)\cap
D_k)}(\gamma^{(1)}-\gamma^{(2)})A(\cdot)
\nabla\tilde{G}_1(\cdot,\omega)\cdot
\nabla\tilde{G}_1(\cdot,\omega)\]

and (see \cite{A-V})

\begin{equation}\label{stima I2}
|I_2(\omega)|\leq CE\rho_0^{2-n},
\end{equation}

where $C$ depends on $\lambda$, $\bar{A}$ and $n$ only. To estimate
$I_1(\omega)$ we recall Theorem \ref{teorema stime asintotiche}
which leads to



\begin{eqnarray*}
|I_1(\omega)| &\geq &
|\gamma_k^{(1)}-\gamma_k^{(2)}|C_1\int_{B_{\rho_0}(P_k)\cap
D_k}|\nabla_x\Gamma(Jx,J\omega)|^2\noindent\\
&-&C_2\int_{B_{\rho_0}(P_k)\cap
D_k}|A(x)||\nabla_x\Gamma(Jx,J\omega)|\frac{|x-\omega|^{1-n+\beta}}{\rho_0^{\beta}}\noindent\\
&-&C_3\int_{B_{\rho_0}(P_k)\cap
D_k}|A(x)|\frac{|x-\omega|^{2-2n+\beta}}{\rho_0^{2\beta}},
\end{eqnarray*}

where $C_1,C_2,C_3$ are constants that depends on
$M,\lambda,\alpha$, $\bar{A}$ and $n$ only. Therefore, by combining
\eqref{stima S} together with \eqref{S=I1+I2} and \eqref{stima
I2}, we obtain

\begin{eqnarray*}
|I_1(\omega)| &\geq &
|\gamma_k^{(1)}-\gamma_k^{(2)}|C_1\int_{B_{\rho_0}(P_k)\cap
D_k}\frac{|J^2(x-\omega)|^2}{|J(x-\omega)|^{2n}}\noindent\\
&-&\frac{C_2E}{{\rho_0^{\beta}}}\int_{B_{\rho_0}(P_k)\cap
D_k}\frac{|J^2(x-\omega)|}{|J(x-\omega)|^{n}}|x-\omega|^{1-n+\beta}\noindent\\
&-&\frac{C_3E}{{\rho_0^{2\beta}}}\int_{B_{\rho_0}(P_k)\cap
D_k}|x-\omega|^{2(1-n)+\beta}.
\end{eqnarray*}

Therefore

\begin{eqnarray}
|I_1(\omega)| &\geq &
|\gamma_k^{(1)}-\gamma_k^{(2)}|C_1\int_{B_{\rho_0}(P_k)\cap
D_k}|x-\omega|^{2(1-n)}\noindent\\
&-&\frac{C_2E}{{\rho_0^{\beta}}}\int_{B_{\rho_0}(P_k)\cap
D_k}|x-\omega|^{2(1-n)+\beta}\noindent\\
&-&\frac{C_3E}{{\rho_0^{2\beta}}}\int_{B_{\rho_0}(P_k)\cap
D_k}|x-\omega|^{2(1-n+\beta)},
\end{eqnarray}

which leads to

\begin{equation}\label{stima I1 dal basso}
|I_1(\omega)|\geq
C_1|\gamma_k^{(1)}-\gamma_k^{(2)}|\sigma^{2-n}-C_2E\frac{\sigma^{2-n+\beta}}{\rho_0^{\beta}},
\end{equation}

where $\beta$ is the number introduced in Theorem \ref{teorema
stime asintotiche} and $C_1$, $C_2$ depend on $M,\lambda,\alpha$, $\bar{A}$
and $n$ only. By combining \eqref{stima I1 dal basso} together with
\eqref{S=I1+I2} and \eqref{stima I2} we obtain

\begin{equation}\label{stima gamma1- gamma2}
C_1|\gamma_k^{(1)}-\gamma_k^{(2)}|\sigma^{2-n}\leq
|\tilde{S}_{\mathcal{U}_{k-1}}(\omega,\omega)|+C_2E\frac{\sigma^{2-n+\beta}}{\rho_0^{\beta}}
\end{equation}

and by Proposition \ref{proposizione unique continuation finale} and \eqref{stima S} we obtain

\[
|\tilde{S}_{\mathcal{U}_{k-1}}(\omega,\omega)| \leq  \sigma ^{2-n}C^{\bar{h}}(E+\varepsilon +\delta_{k-1})
\bigg(\omega_{\frac{1}{C}}\Big(\frac{\varepsilon+\delta_{k-1}}{E+\varepsilon+\delta_{k-1}}\Big)\bigg)^
{\big(\frac{1}{C}\big)^{\bar{h}}},
\]

where $C\geq 1$ is a constant depending on $A$, $L$, $\bar{A}$, $M$, $N$, $\alpha$, $\lambda$ and $n$ only, therefore

\begin{equation}
|\gamma_k^{(1)}-\gamma_k^{(2)}|\leq
C^{\bar{h}}(\varepsilon+\delta_{k-1}+E)\left(\omega_{1/C}^{(2(k-1))}\right)
^{\left(\frac{1}{C}\right)^{\bar{h}}}+C_2 E\Big(\frac{\sigma}{\rho_0}\Big)^{\theta}.
\end{equation}

We need to estimate $C^{\bar{h}}$ and $\Big(\frac{1}{C}\Big)^{\bar{h}}$, where $C>1$. It turns out that

\begin{eqnarray}
C^{\bar{h}} &\leq & C^2\Big(\frac{d_1}{r}\Big)^{-\frac{1}{\log_c a}}\nonumber\\
\big(\frac{1}{C}\Big)^{\bar{h}} &\leq & \Big(\frac{1}{C}\Big)^2\Big(\frac{r}{d_1}\Big)^{-\frac{1}{\log_c a}},
\end{eqnarray}

therefore

\begin{equation}\label{461}
|\gamma_k^{(1)}-\gamma_k^{(2)}|\leq
C(\varepsilon+\delta_{k-1}+E)\left(\left(\frac{d_1}{r}\right)^C\:\left(\omega_{1/C}^{(2(k-1))}\right)
^{\left(\frac{r}{d_1}\right)^C}+\left(\frac{r}{d_1}\right)^{\theta}\right).
\end{equation}

By \eqref{461} we obtain for every $k\in\{1,\dots , K\}$

\[\delta_{k}\leq \delta_{k-1}+C(\varepsilon+\delta_{k-1}+E)\left(\omega_{1/C}^{(2(k+1))}\left(\frac{\varepsilon+\delta_{k-1}}{\varepsilon+\delta_{k-1}+E}\right)\right)^
{\frac{1}{C}},
\]

which leads to

\begin{equation*}
||\gamma^{(1)}-\gamma^{(2)}||_{L^{\infty}(\Omega)}\leq C(\varepsilon + E)\left(\omega_{\frac{1}{C}}^{(K^2)}\left(\frac{\varepsilon}{\varepsilon + E}\right)\right)^{\frac{1}{C}},
\end{equation*}

therefore

\begin{equation}\label{463}
E\leq C(\varepsilon +E)\left(\omega_{\frac{1}{C}}^{(K^2)}\left(\frac{\varepsilon}{\varepsilon + E}\right)\right)^{\frac{1}{C}}.
\end{equation}

Assuming that $E>\varepsilon e^2$ (if this is not the case then the theorem is proven) we obtain

\[E\leq C \left(\frac{E}{e^2}+E\right)\left(\omega_{\frac{1}{C}}^{(K^2)}\left(\frac{\varepsilon}{E}\right)\right)^{\frac{1}{C}},\]

which leads to

\[\frac{1}{C}\leq \omega_{\frac{1}{C}}^{(K^2)}\left(\frac{\varepsilon}{E}\right)\]

therefore

\[E\leq \frac{1}{\omega_{\frac{1}{C}}^{(-K^2)}\left(\frac{1}{C}\right)}\:\varepsilon,\]

which concludes the proof.

\end{proof}




\section{\normalsize{Proof of technical propositions}}\label{PP}

\subsection{\normalsize{Proof of the asymptotic estimates}}\label{AE}

\setcounter{equation}{0}

Whenever $\varphi$ is a Lipschitz continuous function on $\mathbb{R}^{n-1}$, we shall denote by $Q^+_{\varphi,r}$ and
$Q^-_{\varphi,r}$ the following sets

\begin{eqnarray}
Q^+_{\varphi,r}=\{(x',x_n)\in Q_r \ | x_n>\varphi(x') \}\ ,\\
Q^-_{\varphi,r}=\{(x',x_n)\in Q_r \ | x_n<\varphi(x') \} \ .
\end{eqnarray}

Let $0<\mu<1$  and $B^+\in C^{\mu}(\overline{Q^+_{\varphi,r}})$, $B^-\in
C^{\mu}(\overline{Q^-_{\varphi,r}})$ be symmetric,
positive definite matrix valued functions and define

\begin{displaymath}
B(x)=\left\{ \begin{array}{ll}B^+(x),
&\quad
x\in Q^+_{\varphi,r},\\
B^-(x), &\quad x\in Q^-_{\varphi,r}
\end{array} \right.
\end{displaymath}

such that $B$ satisfies the uniform ellipticity condition
\begin{eqnarray}\label{unifellip2}
{\lambda_0}^{-1}|\xi|^2\le B(x)\xi\cdot\xi\le {\lambda_0}|\xi|^2\ ,\ &&\mbox{for almost every}\ x\in Q_r, \\
 &&\mbox{for every}\ \xi\in\mathbb{R}^n \ . \nonumber
\end{eqnarray}

where $\lambda_0>0$ is a constant.

\begin{theorem}\label{rego}
Let $k>0,r>0$ and $0<\alpha<1$ be fixed numbers. Moreover, let $B$
be a matrix as above. Assume that $\varphi\in C^{1,\alpha}(B'_r)$ and let $U\in H^1(Q_r)$ be a solution to

\begin{eqnarray}
div \left(\left(1+(k-1)\chi_{Q^+_{\varphi,r}}\right)B\nabla U\right)=0 \ .
\end{eqnarray}

Suppose $\alpha'$ satisfies at the same time  $0<\alpha'\le\mu$
and $\alpha'<\frac{\alpha}{(\alpha+1)n}$. Then, there exists a
positive constant $C$ such that for any $\rho\le \frac{r}{2}$ and
for any $x\in Q_{r-2\rho}$, the following estimate holds
\begin{eqnarray}\label{stimareg}
&&\|\nabla U\|_{L^{\infty}(Q_{\rho}(x))}+ {\rho}^{{\alpha}'}|\nabla U|_{\alpha',Q_{\rho}(x)\cap Q^+_{\varphi,r} } + {\rho}^{{\alpha}'}|\nabla U|_{\alpha',Q_{\rho}(x)\cap Q^-_{\varphi,r} } \nonumber \\
&&\le \frac{C}{\rho^{1+n/2}}\|U\|_{L^2(Q_{2\rho}(x))}\ ,
\end{eqnarray}
where $C$ depends on $\|\varphi\|_{C^{1,\alpha}(B_r')},
k,\alpha,\alpha',n, \lambda_0$,
$\|B^+\|_{C^{\alpha'}(\overline{Q^+_{\varphi,r}})}$ and
$\|B^-\|_{C^{\alpha'}(\overline{Q^-_{\varphi,r}})}$ only.

\end{theorem}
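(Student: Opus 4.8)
The plan is to obtain the interior gradient bound and the one-sided Hölder estimates for $\nabla U$ by a bootstrap argument that exploits the transmission structure across the interface $\{x_n=\varphi(x')\}$. First I would flatten the interface: since $\varphi\in C^{1,\alpha}(B_r')$ with $\varphi(0)=|\nabla\varphi(0)|=0$ and $\|\varphi\|_{C^{1,\alpha}(B_r')}\le Mr$, the map $(x',x_n)\mapsto(x',x_n-\varphi(x'))$ is a $C^{1,\alpha}$ diffeomorphism of a neighbourhood of $0$ onto a neighbourhood of $0$, sending $Q^{\pm}_{\varphi,r}$ onto genuine half-cylinders $Q^{\pm}_{r}$ (up to shrinking $r$ by a fixed factor depending on $M$). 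Under this change of variables the equation $\mathrm{div}\big((1+(k-1)\chi_{Q^+_{\varphi,r}})B\nabla U\big)=0$ becomes $\mathrm{div}\big((1+(k-1)\chi_{Q^+_r})\widetilde B\,\nabla \widetilde U\big)=0$ with a new matrix $\widetilde B$ whose $C^{\alpha'}$ norm on each side is controlled by $\|B^\pm\|_{C^{\alpha'}}$, $M$ and $\alpha$, provided $\alpha'\le\alpha$; note the Jacobian of the diffeomorphism is only $C^{0,\alpha}$, which is the reason one cannot take $\alpha'$ all the way up to $\mu$ but must accept a loss (this is where the constraint $\alpha'<\frac{\alpha}{(\alpha+1)n}$ enters quantitatively once one tracks the Hölder exponent of the composed coefficients together with the Sobolev embedding exponents used below).

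Second, with a flat interface I would invoke the classical transmission regularity theory for divergence-form elliptic equations with piecewise Hölder coefficients and a flat interface (Ladyzhenskaya--Uraltseva / Li--Vogelius / Li--Nirenberg type results): a weak $H^1$ solution of $\mathrm{div}(a(x)\nabla U)=0$ on $Q_{2\rho}$, where $a$ is uniformly elliptic and belongs to $C^{\alpha'}$ separately on $Q^+_{2\rho}$ and $Q^-_{2\rho}$, satisfies $U\in C^{1,\alpha'}$ up to the interface from each side, with the scale-invariant estimate
\begin{equation*}
\|\nabla U\|_{L^\infty(Q_\rho(x))}+\rho^{\alpha'}\,[\nabla U]_{\alpha',Q_\rho(x)\cap Q^+}+\rho^{\alpha'}\,[\nabla U]_{\alpha',Q_\rho(x)\cap Q^-}\le \frac{C}{\rho^{1+n/2}}\|U\|_{L^2(Q_{2\rho}(x))}.
\end{equation*}
The $L^2$ right-hand side (rather than $L^\infty$ or an energy norm) is obtained by first applying the De Giorgi--Nash--Moser local boundedness estimate $\|U\|_{L^\infty(Q_{3\rho/2})}\le C\rho^{-n/2}\|U\|_{L^2(Q_{2\rho})}$ and Caccioppoli to pass from $L^\infty$ to the $H^1$ energy, and then feeding this into the transmission Schauder estimate; the normalization by powers of $\rho$ is dictated by the scaling $U\mapsto U(\rho\,\cdot)$ under which the equation is invariant. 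Translating back through the $C^{1,\alpha}$ diffeomorphism reintroduces $\|\varphi\|_{C^{1,\alpha}(B_r')}$ into the constant and changes $[\nabla U]_{\alpha'}$ by a bounded factor, yielding \eqref{stimareg} for $x\in Q_{r-2\rho}$ so that $Q_{2\rho}(x)\subset Q_r$.

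Third, a covering/partition argument handles the two geometric cases that arise: if $Q_{2\rho}(x)$ lies entirely on one side of the interface the estimate is just interior Schauder for a single-phase $C^{\alpha'}$ equation, and if it straddles the interface the transmission estimate above applies; in either case the constant depends only on the listed quantities. The main obstacle, and the point requiring the most care, is the bookkeeping of Hölder exponents through the flattening: one must verify that after composition with a merely $C^{1,\alpha}$ change of variables the transformed coefficients and the transformed characteristic-function weight still fit into a transmission-Schauder framework with exponent $\alpha'$, and that the admissible range of $\alpha'$ is exactly $0<\alpha'\le\mu$ together with $\alpha'<\frac{\alpha}{(\alpha+1)n}$ — the second inequality being what guarantees that the error terms generated by the non-smoothness of the Jacobian can be absorbed after the Sobolev/Morrey embeddings used to upgrade integrability in the iteration. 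Everything else is a routine, if lengthy, assembly of standard elliptic estimates.
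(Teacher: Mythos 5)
Your proposal is correct in spirit and rests on exactly the same machinery as the paper: the paper's ``proof'' of Theorem~\ref{rego} is in fact a one-line citation to \cite[Theorem~1.1]{Li-Vo} (with \cite{Li-Ni} as a companion reference), which already contains, for curved $C^{1,\alpha}$ interfaces, the piecewise $C^{1,\alpha'}$ estimate with the $L^2$ right-hand side and the admissibility constraint $\alpha'\le\mu$, $\alpha'<\frac{\alpha}{(\alpha+1)n}$. What you have written is a plausible reconstruction of the internals of that cited result (flatten the interface, apply flat-interface transmission Schauder estimates, use De~Giorgi--Nash--Moser plus Caccioppoli to pass from $L^2$ to energy to $L^\infty$, then a covering argument), whereas Li--Vogelius actually work directly with the curved interface and derive the constraint on $\alpha'$ from their iteration scheme rather than from a flattening step; your heuristic attribution of the bound $\alpha'<\frac{\alpha}{(\alpha+1)n}$ to the Jacobian's limited regularity is therefore speculative, but the conclusion and the ingredients match the reference the paper invokes.
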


\begin{proof}
For the proof we refer to \cite[Theorem 1.1]{Li-Vo}, where the
authors, among various results, obtain piecewise $C^{1,\alpha'}$
estimates for solutions to divergence form elliptic equations with
piecewise H\"{o}lder continuous coefficients (see also
\cite{Li-Ni}).
\end{proof}

We fix $l\in \{1, \dots, K-1 \}$. There exists a rigid
transformation of coordinates under which $P_{l+1}=0$ and

\begin{eqnarray}
&&\Sigma_l\cap Q_{\frac{r_0}{3}}=\{x\in Q_{\frac{r_0}{3}} | x_n=\varphi(x') \} \ ,\\
&&D_{j_{l+1}}\cap Q_{\frac{r_0}{3}}=\{x\in Q_{\frac{r_0}{3}} | x_n>\varphi(x') \} \ ,\\
&&D_{j_{l}}\cap Q_{\frac{r_0}{3}}=\{x\in Q_{\frac{r_0}{3}} |
x_n<\varphi(x') \} \ ,
\end{eqnarray}
where $\varphi$ is a $C^{1,\alpha}$ function on
$B'_{\frac{r_0}{3}}$ satisfying

\begin{eqnarray}
\varphi(0)=|\nabla \varphi (0)|=0 \ , \ \ \ \ \
\|\varphi\|_{C^{1,\alpha}}(B'_{r_0})\le Mr_0\ .
\end{eqnarray}

Moreover, up to a possible replacement of $\gamma$ with
$\frac{\gamma}{\gamma_{j_l}}$, we can assume that
$\gamma|_{D_{jl}}=1$ and $\gamma|_{D_{j_{l+1}}}=k$ where $k$ is a
real number which satisfies
\begin{eqnarray}
\bar{\gamma}\le k \le\bar{\gamma}^{-2} \ .
\end{eqnarray}

Let $\tau$ be a $C^{\infty}$ function on $\mathbb{R}$ such that
$0\le \tau\le 1, \tau(s)=1$ for every $s\in (-1,1), \tau(s)=0$ for
every $s\in \mathbb{R}\setminus(-2,2)$ and $|\tau'(s)|\le 2$ for
every $s\in \mathbb{R}$.

We introduce
\begin{eqnarray}
r_1=\frac{r_0}{3}\min\left\{\frac{1}{2}(8M)^{-\frac{1}{\alpha}},\frac{1}{4}\right\}
\end{eqnarray}
and we consider the following change of variable $\xi=\Phi(x)$
given by

\begin{equation}\label{diffeo}
\left\{
\begin{array}
{lcl} \xi'=x' \ ,
\\
 \xi_n=x_n - \varphi(x')\tau(\frac{|x'|}{r_1})\tau(\frac{x_n}{r_1}) \ .
\end{array}
\right.
\end{equation}

It can be verified that the map $\Phi$ is a
$C^{1,\alpha}(\mathbb{R}^n,\mathbb{R}^n)$ and it satisfies the
following properties

\begin{eqnarray}
\Phi(\Sigma_l\cap Q_{r_1})=\{x\in Q_{r_1} \ | x_n=0  \} \ ,
\end{eqnarray}

\begin{eqnarray}
\Phi(x)=x, \quad \mbox{for every}\ x\in \mathbb{R}^n\setminus Q_{2r_1}
\ ,
\end{eqnarray}

\begin{eqnarray}\label{bili}
C^{-1}|x_1-x_2| &\le & |\Phi(x_1)-\Phi(x_2)|\nonumber\\
&\le & C|x_1-x_2|, \quad\mbox{for every} \ x_1,x_2 \in \mathbb{R}^n ,
\end{eqnarray}

\begin{eqnarray}\label{stimah}
|\Phi(x)-x|\le \frac{C}{r_0^{\alpha}}|x|^{1+\alpha},\quad\mbox{and}
\end{eqnarray}

\begin{eqnarray}
|D\Phi(x)-I|\le \frac{C}{r_0^{\alpha}}|x|^{\alpha},\quad \mbox{for
every}\ \ x\in \mathbb{R}^n \ ,
\end{eqnarray}
where $C,C>1$, depends on $M$ and $\alpha$ only and $I$ denotes the
identity matrix.

Let $y_n\in (-\frac{r_1}{2},0)$ and $y=ye_n$.  We set

\begin{eqnarray}
\eta=\Phi(y)\ ,
\end{eqnarray}
\begin{eqnarray}
\tilde{G}(\xi, \eta)= G(\Phi^{-1}(\xi),\Phi^{-1}(\eta))\ ,
\end{eqnarray}
\begin{eqnarray}
J(\xi)=(D\Phi)(\Phi^{-1}(\xi))\ ,
\end{eqnarray}
\begin{eqnarray}
\tilde{\sigma}(\xi)=\frac{1}{\mbox{det}J(\xi)}J(\xi)\gamma(\Phi^{-1}(\xi))A(\Phi^{-1}(\xi))(J(\xi))^T
\ ,
\end{eqnarray}
we have that $\tilde{G}(\cdot, \eta)$ is a solution to
\begin{equation}\label{GCt}
\left\{
\begin{array}
{lcl} \mbox{div}( \tilde{\sigma}\nabla(\xi)
\tilde{G}(\cdot,\eta))=-\delta(\cdot - \eta)\ ,& \mbox{in $\Omega$
,}
\\
 \tilde{G}(\cdot,\eta)= 0\ ,   & \mbox{on $\partial\Omega$ .}
\end{array}
\right.
\end{equation}


We have
\begin{eqnarray}\label{at}
\tilde{\sigma}(\xi)=(1 + (k-1)\chi^+(\xi))B(\xi), \quad \mbox{for any} \ \xi
\in Q_{r_1}\ ,
\end{eqnarray}
where $\chi^+$ is the characteristic function of $\mathbb{R}_+^n$
and
\begin{eqnarray}
B(\xi)=\frac{1}{\mbox{det}J(\xi)}J(\xi)A(\Phi^{-1}(\xi))(J(\xi))^T
.
\end{eqnarray}

Furthermore, we have that $B$ is of class $C^{\alpha}$ and
\begin{eqnarray}\label{bh}
\|B\|_{C^{0,\alpha}(\Omega)}\le C \ ,
\end{eqnarray}

where $C>0$ is a constant depending on $M,\alpha, \lambda,
\bar{A}$ only. We also have that
$B(0)={A}(0)$ \ . We denote

\begin{eqnarray}\label{azero}
{\sigma}_0(\xi)=(1 + (k-1)\chi^+(\xi))A(\xi)
\end{eqnarray}

and with

\begin{eqnarray}\label{azeroi}
{\sigma}_{0,0}(\xi)=(1 + (k-1)\chi^+(\xi))A(0)
\end{eqnarray}

and we refer to $G_0$ as to the Green function solution to

\begin{equation}\label{GCz}
\left\{
\begin{array}
{lcl} \mbox{div}({\sigma}_{0,0}(\cdot)\nabla G_0(\cdot,y))=-\delta(\cdot -
y)\ ,& \mbox{in $\Omega$ ,}
\\
 G_0(\cdot,y)= 0\ ,   & \mbox{on $\partial\Omega$ .}
\end{array}
\right.
\end{equation}

We then define
\begin{eqnarray}\label{diff}
R(\xi, \eta)= \tilde{G}(\xi,\eta) - G_0(\xi,\eta)\ .
\end{eqnarray}

\begin{lemma}\label{lemmaasi}
For every $\xi\in B^+_{\frac{r_1}{4}}$ and $\eta_n\in
(-\frac{r_1}{4},0)$ we have that

\begin{eqnarray}
|R(\xi,e_n\eta_n)| + |\xi-e_n\eta_n||\nabla_{\xi}R(\xi,\eta)|\le
\frac{c}{r_1^{\beta}}|\xi-e_n\eta_n|^{\beta+2-n}\ ,
\end{eqnarray}

where $\beta\in (0,\alpha^2]$ depends on $\alpha$ and $n$ only and
$C$ depends on $M,\bar{\gamma}, \lambda, \bar{A}$ only.
\end{lemma}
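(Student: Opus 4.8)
The plan is to work directly with the elliptic equation satisfied by the remainder $R(\cdot,\eta)=\tilde G(\cdot,\eta)-G_0(\cdot,\eta)$. Subtracting \eqref{GCz} from \eqref{GCt} and using that $\tilde G(\cdot,\eta)$ and $G_0(\cdot,\eta)$ are Green's functions with pole at the \emph{same} point $\eta$, the two Dirac masses cancel and $R(\cdot,\eta)$ is a distributional solution of
\begin{equation*}
\textnormal{div}\big(\tilde\sigma(\xi)\nabla_\xi R(\cdot,\eta)\big)=\textnormal{div}\big((\sigma_{0,0}(\xi)-\tilde\sigma(\xi))\nabla_\xi G_0(\cdot,\eta)\big)\quad\textnormal{in }\Omega,\qquad R(\cdot,\eta)=0\ \textnormal{ on }\partial\Omega .
\end{equation*}
The whole mechanism rests on the source $g:=(\sigma_{0,0}-\tilde\sigma)\nabla_\xi G_0(\cdot,\eta)$ being small near the interface. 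By \eqref{at} and \eqref{azeroi}, on $Q_{r_1}$ one has $\sigma_{0,0}-\tilde\sigma=(1+(k-1)\chi^+)(A(0)-B(\xi))$, and since $B(0)=A(0)$, $B\in C^{0,\alpha}$ with the bound \eqref{bh}, and $\Phi$ satisfies \eqref{bili}--\eqref{stimah}, this gives $|\sigma_{0,0}(\xi)-\tilde\sigma(\xi)|\le C\,r_0^{-\alpha}|\xi|^{\alpha}$ on $Q_{r_1}$, while $|\sigma_{0,0}-\tilde\sigma|\le C$ on $\Omega\setminus Q_{r_1}$. Since $\sigma_{0,0}=(1+(k-1)\chi^+)A(0)$ is a piecewise-constant elliptic coefficient with flat interface, Theorem \ref{rego} applied to $G_0(\cdot,\eta)$ together with \eqref{standardbeh} yields $|\nabla_\xi G_0(\xi,\eta)|\le C|\xi-\eta|^{1-n}$, whence
\begin{equation*}
|g(\xi)|\le C\,r_0^{-\alpha}|\xi|^{\alpha}|\xi-\eta|^{1-n}\ \textnormal{ on }Q_{r_1},\qquad |g(\xi)|\le C|\xi-\eta|^{1-n}\ \textnormal{ on }\Omega\setminus Q_{r_1}.
\end{equation*}

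For the pointwise bound on $R$ I would use the Green's representation with respect to $\tilde G$: after a standard approximation of the pole and a cutoff near $\xi_0$ (needed because $R(\cdot,\eta)$ fails to be globally in $H^1(\Omega)$ — the leading singularities of $\tilde G$ and $G_0$ at $\eta$ agree only up to a term of size $C\,r_0^{-\alpha}|\eta_n|^{\alpha}|\xi-\eta|^{2-n}$, as $\tilde\sigma(\eta)=B(\eta)$ differs from $\sigma_{0,0}(\eta)=A(0)$ by $O(r_0^{-\alpha}|\eta_n|^{\alpha})$), one has
\begin{equation*}
R(\xi_0,\eta)=\int_{\Omega}\nabla_z\tilde G(\xi_0,z)\cdot g(z)\,dz .
\end{equation*}
Fix $\xi_0\in B^+_{r_1/4}$ and $\eta=\eta_n e_n$, $\eta_n\in(-\tfrac{r_1}{4},0)$, and put $d=|\xi_0-\eta|$. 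Since $\xi_0$ lies above the flat interface $\{\xi_n=0\}$ and $\eta$ below it, $d\ge|\eta_n|$, $d\ge\xi_{0,n}$, and $|\xi_0|,|\eta|\le d$. Using $|\nabla_z\tilde G(\xi_0,z)|\le C|\xi_0-z|^{1-n}$ (again Theorem \ref{rego}) together with the bound on $g$, the integral over $Q_{r_1}$ is dominated by $C\,r_0^{-\alpha}\int_{\mathbb{R}^n}|\xi_0-z|^{1-n}|z|^{\alpha}|z-\eta|^{1-n}\,dz$, which, after splitting into $\{|z|<2d\}$ and $\{|z|\ge 2d\}$ and rescaling, is $\le C\,r_0^{-\alpha}d^{\,\alpha+2-n}$; the integral over $\Omega\setminus Q_{r_1}$ is bounded, by H\"older's inequality and the Caccioppoli estimate of Proposition \ref{proposizione green function} applied to $\tilde G(\xi_0,\cdot)$ and $G_0(\cdot,\eta)$, by $C\,r_1^{2-n}\le C\,r_1^{-\beta}d^{\,\beta+2-n}$ for any $\beta<\alpha$ (here $d<\tfrac{r_1}{2}$ and $r_1\sim r_0$). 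As $r_1$ depends only on $r_0,M,\alpha$, this gives $|R(\xi_0,\eta)|\le C\,r_1^{-\beta}|\xi_0-\eta|^{\,\beta+2-n}$; the same argument with $\xi_0$ below the interface yields the analogous bound, so in fact $|R(\xi,\eta)|\le C\,r_1^{-\beta}|\xi-\eta|^{\,\beta+2-n}$ for every $\xi\in B_{r_1/3}$.

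To bound $\nabla_\xi R$ I would not differentiate the representation (the mixed second derivatives of $\tilde G$ degenerate at the interface) but instead invoke the interior piecewise $C^{1,\alpha'}$ estimate. Fix $\xi_0\in B^+_{r_1/4}$, $\eta$ as above, and set $\rho=c_0 d$ with $c_0>0$ small enough (depending on $n$) that $\eta\notin Q_{2\rho}(\xi_0)$, $Q_{2\rho}(\xi_0)\subset Q_{r_1}\cap B_{r_1/3}$, and $|\xi-\eta|\sim d$, $|\xi|\le Cd$ on $Q_{2\rho}(\xi_0)$. On $Q_{2\rho}(\xi_0)$ the field $\tilde\sigma\nabla R-g$ is divergence free, with $\tilde\sigma=(1+(k-1)\chi^+)B$, $B\in C^{0,\alpha}$, flat interface, and $g$ piecewise $C^{\alpha'}$; applying Theorem \ref{rego} (in its inhomogeneous form, i.e.\ to $R$ minus a particular solution of $\textnormal{div}(\tilde\sigma\nabla\cdot)=\textnormal{div}(g)$ on $Q_{2\rho}(\xi_0)$) after rescaling $Q_{2\rho}(\xi_0)$ to unit size gives
\begin{equation*}
\|\nabla R\|_{L^{\infty}(Q_{\rho}(\xi_0))}\le C\,\rho^{-1-n/2}\|R\|_{L^2(Q_{2\rho}(\xi_0))}+C\,\|g\|_{L^{\infty}(Q_{2\rho}(\xi_0))}+C\,\rho^{\alpha'}\big([g]_{\alpha',\,Q_{2\rho}(\xi_0)\cap\mathbb{R}^n_+}+[g]_{\alpha',\,Q_{2\rho}(\xi_0)\cap\mathbb{R}^n_-}\big).
\end{equation*}
By the $L^{\infty}$ bound just proved (and $\rho\sim d$, $Q_{2\rho}(\xi_0)\subset B_{r_1/3}$) the first term is $\le C\,r_1^{-\beta}d^{\,\beta+1-n}$; a direct estimate of $g$ and of its H\"older seminorm on a cube of size $\rho$ at distance $\sim d$ from $\eta$ bounds the remaining terms by $C\,r_0^{-\alpha}d^{\,\alpha+1-n}$. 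Since $\beta<\alpha$ and $d<r_1\le r_0$, altogether $|\nabla_\xi R(\xi_0,\eta)|\le C\,r_1^{-\beta}d^{\,\beta+1-n}$, i.e.\ $|\xi_0-\eta|\,|\nabla_\xi R(\xi_0,\eta)|\le C\,r_1^{-\beta}|\xi_0-\eta|^{\,\beta+2-n}$; combined with the previous step, this is the assertion of the lemma.

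The main obstacle is the quantitative, scale-invariant bookkeeping. The difference $\sigma_{0,0}-\tilde\sigma$ is small only inside $Q_{r_1}$, so the ``far field'' $\Omega\setminus Q_{r_1}$ must be reabsorbed as a lower-order term — which works precisely because $d<\tfrac{r_1}{2}$ and $r_1$ is comparable to $r_0$ — and the whole argument must be carried up to the interface with constants depending only on $M,\bar\gamma,\lambda,\bar A$; the restriction $\alpha'<\alpha/((\alpha+1)n)$ in Theorem \ref{rego} and the chain of rescalings are what force one to settle for some $\beta\in(0,\alpha^2]$ instead of $\beta=\alpha$. A secondary point is the mild singularity of $R$ at the pole $\eta$: it is harmless only because $\xi$ and $\eta$ lie on opposite sides of the interface, so that $|\xi-\eta|\ge|\eta_n|$ and the mismatch $C\,r_0^{-\alpha}|\eta_n|^{\alpha}|\xi-\eta|^{2-n}$ is itself $\le C\,r_0^{-\alpha}|\xi-\eta|^{\,\alpha+2-n}$.
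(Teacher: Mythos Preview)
Your argument for the pointwise bound on $R$ is essentially the paper's: the same representation formula
\[
R(\xi,\eta)=\int_\Omega(\tilde\sigma-\sigma_{0,0})\nabla_\zeta G_0(\zeta,\eta)\cdot\nabla_\zeta\tilde G(\zeta,\xi)\,d\zeta,
\]
the same near/far splitting $Q_{r_1}$ versus $\Omega\setminus Q_{r_1}$, the same H\"older--Caccioppoli treatment of the far piece, and the same rescaled potential estimate on the near piece (the paper scales by $h=|\xi-\eta|$ and splits at $B_{4h}$, you at $2d$).

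For $\nabla_\xi R$ you take a genuinely different route. The paper does \emph{not} use an inhomogeneous regularity estimate for $R$. Instead it applies the homogeneous Theorem~\ref{rego} to $\tilde G(\cdot,\eta)$ and to $G_0(\cdot,\eta)$ separately on a cylinder $Q\subset Q^+_{r_1/2}$ of size $\sim h$, obtaining $|\nabla_\xi\tilde G|_{\alpha',Q},\,|\nabla_\xi G_0|_{\alpha',Q}\le Ch^{-\alpha'+1-n}$; by the triangle inequality this bounds $|\nabla_\xi R|_{\alpha',Q}$, and then the interpolation inequality
\[
\|\nabla_\xi R\|_{L^\infty(Q)}\le C\,\|R\|_{L^\infty(Q)}^{\alpha'/(1+\alpha')}\,|\nabla_\xi R|_{\alpha',Q}^{1/(1+\alpha')}
\]
combined with the already-proved bound on $\|R\|_{L^\infty}$ yields the gradient estimate. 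This is why the paper ends up with $\beta=\alpha\alpha'/(1+\alpha')\le\alpha^2$: the interpolation step costs a power. Your approach---applying a Li--Vogelius type Schauder estimate directly to the inhomogeneous equation for $R$---is legitimate (the inhomogeneous version is in \cite{Li-Vo}) and in fact would give the better exponent $\beta=\alpha$, but it appeals to a statement strictly stronger than Theorem~\ref{rego} as written; the reduction you sketch (``$R$ minus a particular solution'') is not free, since controlling $\nabla w$ piecewise-$C^{\alpha'}$ across the interface for the particular solution $w$ is essentially the inhomogeneous estimate itself. The paper's interpolation trick buys self-containment at the price of a weaker (but sufficient) $\beta$; your route is cleaner and sharper but leans on an external input.
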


\begin{proof}
It is easy to check that $R$ in \eqref{diff} satisfies

\begin{equation}\label{eqdiff}
\left\{
\begin{array}
{lcl} \mbox{div}_{\xi}(\tilde{\sigma}(\cdot)\nabla_{\xi}
R(\cdot,\eta))=-\mbox{div}_{\xi}((\tilde{\sigma}(\cdot)-{\sigma}_{0,0}(\cdot))\nabla_{\xi}
G_0(\cdot,\eta))\ ,& \mbox{in $\Omega$ ,}
\\
 R(\cdot,\eta)= 0\ ,   & \mbox{on $\partial\Omega$ .}
\end{array}
\right.
\end{equation}

By the representation formula over $\Omega$ we have that $R$ in
\eqref{diff} satisfies

\begin{eqnarray}
R(\xi,\eta)=\int_{\Omega}(\tilde{\sigma}(\zeta)-{\sigma}_{0,0}(\zeta))\nabla_{\zeta}G_0(\zeta,\eta)\cdot\nabla\tilde{G}(\zeta,\xi)d\zeta\
.
\end{eqnarray}
We consider $\xi\in Q^+_{\frac{r_1}{2}}$ and $\eta=e_n\eta_n$ and
we split $R$ as the sum of the following integrals
\begin{eqnarray}
&& R_1(\xi,\eta)=\int_{\Omega\setminus Q_{r_1}}(\tilde{\sigma}(\zeta)-{\sigma}_{0,0}(\zeta))\nabla_{\zeta}G_0(\zeta,\eta)\cdot\nabla\tilde{G}(\zeta,\xi)d\zeta\ \ ,\\
&&
R_2(\xi,\eta)=\int_{Q_{r_1}}(\tilde{\sigma}(\zeta)-{\sigma}_{0,0}(\zeta))\nabla_{\zeta}G_0(\zeta,\eta)\cdot\nabla\tilde{G}(\zeta,\xi)d\zeta\
\ .
\end{eqnarray}

By the bounds \eqref{apriorigamma},
\eqref{unifellip},\eqref{aprioria} and by combining the Schwartz
inequality with the Caccioppoli inequality we get

\begin{eqnarray}
|R_1(\xi,\eta)|\le \frac{C}{r_1^2}\|G_0(\cdot,
\eta)\|_{L^2(\Omega\setminus Q_{3r_1/{4}})}\|\tilde{G}(\cdot,
\eta)\|_{L^2(\Omega\setminus Q_{{3r_1}/{4}})},
\end{eqnarray}

where $C>0$ depends on $M,\alpha,\bar{\gamma},\lambda$ and
$\bar{A}$ only. By the standard behaviour \eqref{standardbeh} of
the Green functions at hand, it follows that

\begin{eqnarray}\label{R1b}
|R_1(\xi,\eta)|\le C r_1^{2-n}\ ,
\end{eqnarray}

where $C>0$ depends on $M,\alpha,\bar{\gamma},\lambda$ and
$\bar{A}$ only. Moreover being $B(0)=A(0)$, it follows that \eqref{aprioria} and \eqref{bh} lead to

\begin{eqnarray}\label{as1}
|\tilde{\sigma}(\xi)- {\sigma}_{0,0}(\xi)|\le \max\{1,k\}(|B(\xi)-A(0)| )\le \frac{C}{r_1}^{\alpha} |\xi|^{\alpha},
\end{eqnarray}

for any $\xi\in Q_{r_1}$, where $C$ depends on $M,\alpha, \bar{A}$
and $\bar{\gamma}$ only. Moreover, by \eqref{standardbeh} and by Theorem \ref{rego} we have
that

\begin{eqnarray}\label{as2}
|\nabla_{\zeta}G_0(\zeta,\xi)|\le C|\zeta-\xi|^{1-n} \ , \quad
\mbox{for every}\ \zeta, \xi\in Q_{r_1},
\end{eqnarray}

where $C$ depends on $M,\alpha, \bar{A}$ and $\bar{\gamma}$ only. By \eqref{bili} and the same arguments used above, we infer that

\begin{eqnarray}\label{as3}
|\nabla_{\zeta}\tilde{G}(\zeta,\xi)|\le C|\zeta-\xi|^{1-n}, \quad
\mbox{for every}\ \zeta, \xi\in Q_{r_1},
\end{eqnarray}

where $C$ depends on $M,\alpha, \bar{A}$ and $\bar{\gamma}$ only. We denote

\begin{eqnarray}
I_1=\int_{B_{4h}}|\zeta|^{\alpha}|\zeta-\xi|^{1-n}|\zeta-\eta|^{1-n}d\eta
\end{eqnarray}

and

\begin{eqnarray}
I_2=\int_{\mathbb{R}^n\setminus
B_{4h}}|\zeta|^{\alpha}|\zeta-\xi|^{1-n}|\zeta-\eta|^{1-n}d\eta .
\end{eqnarray}

By \eqref{as1}, \eqref{as2} and \eqref{as3} we have that

\begin{eqnarray}\label{R2b}
|R_2(\xi,\eta)|\le \frac{C}{r_1^{\alpha}}(I_1+I_2) \ .
\end{eqnarray}

Let us denote now $h=|\xi-\eta|$ and consider the following change of variables $\zeta=hw$; we set $t=\frac{\xi}{h}$ and $s=\frac{\eta}{h}$ and it follows that for any $t,s\in \mathbb{R}^n$ we have that
$|t-s|=1$. We obtain that

\begin{eqnarray}
I_1\le 4^{\alpha}h^{\alpha+2-n}\int_{B_4}|t-w|^{1-n}|s-w|^{1-n}dw\
.
\end{eqnarray}

Let us now set
\begin{eqnarray}
F(t,s)= \int_{B_4}|t-w|^{1-n}|s-w|^{1-n}dw\ \ .
\end{eqnarray}
From standard bounds (see for instance, \cite[Chapter 2]{Mi}) we
have that

\begin{eqnarray}
F(t,s)\le C ,
\end{eqnarray}
where $C$ depends on $n$ only. Hence
\begin{eqnarray}\label{i1}
I_1\le C h^{\alpha+2-n}\ ,
\end{eqnarray}
where $C$ depends on $n$ only. We consider now integral $I_2$. We recall that $\eta=e_n\eta_n$, where
$\eta_n\in(-\frac{r_1}{2},0)$ and $\xi\in Q^+_{\frac{r_1}{2}}$, hence we
have

\begin{eqnarray}
|\eta|=-\eta_n\le -\eta_n +\xi_n\le |\xi-\eta|=h \ ,
\end{eqnarray}

which leads to

\begin{eqnarray}
|\xi|\le |\xi-\eta| + |\eta|\le 2h \ .
\end{eqnarray}

On the other hand, we have that for any $\zeta\in
\mathbb{R}^n\setminus B_{4h}$

\begin{eqnarray}
|\zeta|\le |\zeta-\eta|+|\eta|\le |\zeta-\eta|+\frac{1}{4}|\zeta |
\ ,
\end{eqnarray}

hence we get

\begin{eqnarray}\label{esterno1}
\frac{3}{4}|\zeta|\le |\zeta-\eta|\ .
\end{eqnarray}

and by using the same arguments we get

\begin{eqnarray}\label{esterno2}
\frac{1}{2}|\zeta|\le |\xi-\zeta|\ , \ \mbox{for any}\ \zeta \in
\mathbb{R}^n\setminus B_{4h}\ .
\end{eqnarray}

By combining \eqref{esterno1} together with \eqref{esterno2}, we obtain that
\begin{eqnarray}\label{i2}
I_2\le \left(\frac{8}{3}\right)^{1-n}\int_{\mathbb{R}^n\setminus
B_{4h}}|\zeta|^{\alpha+2-2n}d\zeta \le C h^{\alpha+2-n} \ ,
\end{eqnarray}

where $C$ depends on $\alpha$ and $n$ only. By combining \eqref{R1b},\eqref{R2b},\eqref{i1} and \eqref{i2} we
obtain

\begin{eqnarray}\label{stimaR}
|R(\xi,\eta)|\le \frac{C}{r_1^{\alpha}}h^{\alpha+2-n}\ ,
\end{eqnarray}

where $C$ depends on $M,\alpha, \bar{A},n$ and $\bar{\gamma}$ only. Let us fix $\xi \in B^+_{\frac{r_1}{4}}$ and $\eta_n\in (-r_1/4,
0)$ and consider the cylinder

\begin{eqnarray}
Q=B'_{\frac{h}{8}}(\xi') \times \left(\xi_n, \xi_n
+\frac{h}{8}\right) \ .
\end{eqnarray}

Observing that $h=|\xi-(0,\eta_n e_n)|\le \frac{r_1}{2}$ we deduce
that $Q\subset Q^{+}_{\frac{r_1}{2}}$. Moreover
$Q\subset Q_{\frac{h}{4}(\xi)}$ and $\xi\in \partial Q$, then by choosing for instance $\alpha'=\frac{1}{2}\min \left \{\alpha,
\frac{\alpha}{(\alpha +1)n} \right \}$ in the statement of Theorem
\ref{rego} and observing that $(0,\eta_n e_n) \notin
Q_{\frac{h}{2}}(\xi)$, by  \eqref{stimareg} we obtain the
following bound for the seminorm

\begin{eqnarray}\label{seminorma1a}
|\nabla _{\xi}\tilde{G}(\cdot, e_n \eta_n)|_{\alpha', Q}&&\le |\nabla _{\xi}\tilde{G}(\cdot, e_n \eta_n)|_{\alpha', Q_{\frac{h}{4}(\xi)}\cap Q^+_{\frac{r_1}{2}}} \nonumber \\
&& \le C h^{-\alpha'-1-n/2}\|\nabla _{\xi}\tilde{G}(\cdot, e_n
\eta_n)\|_{L^2(Q_{\frac{h}{2}(\xi)})},
\end{eqnarray}

where $C$ depends on $M,\alpha, \bar{A},n$ and $\bar{\gamma}$ only. Furthermore by observing that for any $\tilde{\xi}\in
Q_{\frac{h}{2}(\xi)}$ we have that $|\tilde{\xi}-(0,e_n\eta_n)|\ge
\frac{h}{2}$ and by \eqref{standardbeh} we have that

\begin{eqnarray}\label{seminorma1}
|\nabla _{\xi}\tilde{G}(\cdot, e_n \eta_n)|_{\alpha', Q}\le C
h^{\alpha' +1-n}\ ,
\end{eqnarray}

where $C$ depends on $M,\alpha, \bar{A},n$ and $\bar{\gamma}$ only. By analogous argument we may also infer that

\begin{eqnarray}\label{seminorma2}
|\nabla _{\xi}{G_0}(\cdot, e_n \eta_n)|_{\alpha', Q}\le C
h^{\alpha' +1-n}\ ,
\end{eqnarray}

where $C$ depends on $M,\alpha, \bar{A},n$ and $\bar{\gamma}$ only. Hence by \eqref{diff}, \eqref{seminorma1} and \eqref{seminorma2}
we obtain

\begin{eqnarray}
|\nabla _{\xi}R(\cdot, e_n \eta_n)|_{\alpha', Q}\le C  h^{\alpha'
+1-n}\ ,
\end{eqnarray}

where $C$ depends on $M,\alpha, \bar{A},n$ and $\bar{\gamma}$
only. We recall the following interpolation inequality (see for instance
\cite[Proposition 8.3]{A-S})

\begin{eqnarray}
\|\nabla_{\xi}R(\cdot, e_n\eta_n)\|_{L^{\infty}(Q)}\le \|R(\cdot,
e_n\eta_n)\|^{\frac{\alpha'}{1+\alpha'}}_{L^{\infty}(Q)}|\nabla_{\xi}R(\cdot,
e_n\eta_n)|_{\alpha', Q}^{\frac{1}{1+\alpha'}} \ ,
\end{eqnarray}

where $C$ depends on $M,\alpha, \bar{A},n$ and $\bar{\gamma}$
only. By the above estimate and \eqref{stimaR} we get

\begin{eqnarray}
\ \ \ \ \ \ \ \ |\nabla_{\xi}R(\xi,e_n\eta_n)|\le
\frac{C}{r_1^{\beta}}h^{\beta +1 -n}, \ \mbox{for every}\ \xi \in
B^+_{\frac{r_1}{4}} \ \mbox{and}\  \eta\in
\left(-\frac{r_1}{4},0\right),
\end{eqnarray}

where $C$ depends on $M,\alpha, \bar{A},n$ and $\bar{\gamma}$
only. The thesis follows with $\beta=
\frac{\alpha'^2}{1+\alpha'}$.
\end{proof}

\begin{proof}[Proof of Theorem \ref{teorema stime asintotiche}.]
We first assume that the auxiliary hypothesis that $A(0)=I$ is fulfilled and denote with $H(\xi,\eta)$ the half space fundamental
solution of the operator
$\mbox{div}_{\xi}((1+(k-1))\chi^+(\xi)I(\xi)\nabla_{\xi})$ which
has the following explicit form

\begin{equation}\label{fund}
H(\xi,\eta)= \left\{
\begin{array}
{lcl} \frac{1}{k}\Gamma(\xi,\eta) + \frac{k-1}{k(k+1)}\Gamma(\xi,\eta^*)\ , &&\mbox{if}\ \xi_n,\eta_n>0\\
\frac{2}{k+1}\Gamma(\xi,\eta) \ , &&\mbox{if}\ \xi_n\cdot \eta_n<0\ \\
\Gamma(\xi,\eta) + \frac{1-k}{k+1}\Gamma(\xi,\eta^*)\ ,
&&\mbox{if}\ \xi_n,\eta_n<0
\end{array}
\right.
\end{equation}

where $\Gamma$ is the distribution introduced in \eqref{GC} and for any $\xi=(\xi',\xi_n)$ we denote $\xi^*=(\xi',-\xi_n)$. Let $\eta_n\in (-\frac{r_1}{4},0)$, then we have that

\begin{equation*}
\left\{
\begin{array}
{lcl} \mbox{div}_{\xi}((1+(k-1))\chi^+(\xi)I(\xi)\nabla_{\xi}(G_0(\xi,e_n\eta_n) - H(\xi, e_n\eta_n)))= 0 \ , \ \mbox{in} \ Q_{\frac{r_1}{2}}\ , \\
|(G_0(\xi,e_n\eta_n) - H(\xi, e_n\eta_n))|\le C r_1^{n-2} ,\ \ \ \
\mbox{for any}\ \xi\in \partial  Q_{\frac{r_1}{2}}\ .
\end{array}
\right.
\end{equation*}

Hence by the maximum principle we can infer that

\begin{eqnarray}\label{nosingsol}
\|G_0(\cdot,e_n\eta_n)-H(\cdot,
e_n\eta_n)\|_{L^{\infty}(Q_{\frac{r_1}{2}})}\le  C r_1^{n-2}
\end{eqnarray}

and by Theorem \ref{rego} we deduce that

\begin{eqnarray}\label{nosinggra}
\|\nabla_{\xi}G_0(\cdot,e_n\eta_n)-\nabla_{\xi}H(\cdot,
e_n\eta_n)\|_{L^{\infty}(Q_{\frac{r_1}{4}})}\le  C r_1^{n-1} \ .
\end{eqnarray}

We now consider a point $x\in \Phi^{-1}(B_{\frac{r_1}{4}}^{+})$
and $y_n\in (-\frac{r_1}{2},0)$, then we observe that being
$\Phi(y)=y$ we have that

\begin{eqnarray}
|\Phi(y)|=|\Phi(y)-\Phi(0)|\le |\Phi(y)-\Phi(x)|\ .
\end{eqnarray}

Moreover, by \eqref{bili} and the above estimate we have that

\begin{eqnarray}
C^{-1}|x|\le |\Phi(x)|\le|\Phi(x)-\Phi(y)|+|\Phi(y)|\le C|x-y|\ .
\end{eqnarray}

By combining the above estimate with \eqref{stimah}, we infer
that

\begin{eqnarray}\label{stimahh}
|\Phi(x)-x|\le \frac{C}{r_0^{\alpha}}|x|^{1+\alpha}\le
\frac{C}{r_0^{\alpha}}|x-e_n y_n|^{1+\alpha} \ ,
\end{eqnarray}

where $C$ depends on $M$ and $\alpha$ only. Let $\{A_k\}_{k\geq 1}$ be a regularizing sequence for $A$ obtained by
convolution with a sequence of mollifiers, then we have that

\begin{eqnarray}
\|A_k\|_{C^{1}(\Omega)}\le 2\bar{A},\quad\mbox{for any}\ k\in
\mathbb{N}
\end{eqnarray}

and $A_k$ satisfies \eqref{unifellip}, with $A=A_k$, $k\in
\mathbb{N}$. Let us introduce the following function

\begin{eqnarray}
F_k : &&B_{r_0}\setminus \{{e_ny_n}\} \rightarrow \mathbb{R}\\
    && z \mapsto  <A_k(z)(z-e_ny_n),(z-e_ny_n)>^{\frac{2-n}{2}},
\end{eqnarray}

where $<\cdot,\cdot>$ denotes the Euclidean scalar product of vectors in $\mathbb{R}^n$. Given $z_1,z_2\in B_{r_0}\setminus \{{e_ny_n}\}$ by the Mean-Value
Theorem, there exists $t_k, 0<t_k<1$ such that

\begin{eqnarray*}
|F_k(z_1)-F_k(z_2)| & &\le C|z_1-z_2| \Big(|<A_k(z_{t_k})(z_{t_k}-e_ny_n),(z_{t_k}-e_ny_n)>^{\frac{1-n}{2}}|\\
 &&+ \Big|<A_k(z_{t_k})(z_{t_k}-e_ny_n),(z_{t_k}-e_ny_n)>^{-\frac{n}{2}}\Big|\\
& & \times\Big| <\sum_{i=1}^n\partial_{z_i}Ak(z_{t_k})(z_{t_k}-e_ny_n),(z_{t_k}-e_ny_n)> \Big|\Big)
\end{eqnarray*}

where $z_{t_k}=z_1 +{t_k}(z_2-z_1)$ and where $C$ depends on
depends on $M,\alpha, \bar{A}$ and $n$ only. Let us denote with $\Gamma_k$ the fundamental solution introduced
in \eqref{GC2} associated to the matrix $A_k$. We choose
$z_1=\Phi(x)$ and $z_2=x$ and we have that


\begin{eqnarray*}
|\Gamma_k(\Phi(x), e_n y_n)-\Gamma_k(x, e_n y_n)| \le C|\Phi(x)-x||x-e_n y_n +t_k(\Phi(x)-x)|^{1-n}\ ,
\end{eqnarray*}

$C$ depends on depends on $M,\alpha, \bar{A}, \lambda$ and $n$
only. By \eqref{stimahh} and the triangle inequality  we deduce that for
any $x\in D_{j_{l+1}}\cap B_{\frac{r_0}{{4C}^{1/\alpha}}}$ we get

\begin{eqnarray}
&&|x-e_ny_n-t_k(\Phi(x)-x)|\ge |x-e_ny_n|-|t_k||\Phi(x)-x| \\
&&\ge |x-e_ny_n|-|x-e_ny_n|^{1+\alpha}\ge \frac{1}{2}|x-e_n-y_n| \
.
\end{eqnarray}

Finally combining the above estimates and \eqref{stimahh} we
obtain

\begin{eqnarray}\label{stimavicinanza2}
|\Gamma_k(\Phi(x), e_n y_n)-\Gamma_k(x, e_n y_n)| \le C
|x-e_ny_n|^{2-n +\alpha},
\end{eqnarray}

where $C$ depends on $M,\alpha, \lambda, \bar{A}$ and $n$ only.
Now since $A_k$ converges uniformly to $A$ in $\overline{\Omega}$ we can infer that

\begin{eqnarray}\label{stimavicinanza}
|\Gamma(\Phi(x), e_n y_n)-\Gamma(x, e_n y_n)| \le C,
|x-e_ny_n|^{2-n +\alpha},
\end{eqnarray}

for $x\in  \Phi^{-1}(B_{\frac{r_1}{4}}^{+})$, where $C$ depends on $M,\alpha, \lambda, \bar{A}$ and $n$ only. By \eqref{nosingsol}, \eqref{nosinggra} and \eqref{stimavicinanza}
we have

\begin{eqnarray}\label{soluzione}
|G_0(\Phi(x), e_ny_n) - H(x, e_n y_n)| &\le & |G_0(\Phi(x), e_ny_n) - H(\Phi(x), e_n y_n)|\nonumber\\
& + & |H(\Phi(x), e_ny_n) - H(x, e_n y_n)|\nonumber \\
&\le &\frac{C}{r_0^{\alpha}}|x-e_n y_n|^{\alpha +2-n} \
\end{eqnarray}

and

\begin{eqnarray}\label{gradiente}
|\nabla G_0(\Phi(x), e_ny_n) - \nabla H(x, e_n y_n)| \le
\frac{C}{r_0^{\alpha}}|x-e_n y_n|^{\alpha +1-n} \ ,
\end{eqnarray}

for $x\in  \Phi^{-1}(B_{\frac{r_1}{4}}^{+})$, where $C$
depends on $M,\lambda, \bar{\gamma}, \alpha$ and $n$ only. Moreover, by Lemma \ref{lemmaasi}, \eqref{bili} and recalling that
$\Phi(y)=y$, we get

\begin{eqnarray}\label{resto}
\ \ \ \ |R(\Phi(x),e_n\eta_n)| +
|x-e_n\eta_n||\nabla_{\xi}R(\Phi(x),\eta)|\le
\frac{c}{r_1^{\beta}}|\xi-e_n\eta_n|^{\beta+2-n} ,
\end{eqnarray}

for $x\in  \Phi^{-1}(B_{\frac{r_1}{4}}^{+})$ and where $C$
depends on $M,\lambda, \bar{\gamma}, \alpha$ and $n$ only. Gathering \eqref{soluzione}, \eqref{gradiente}, \eqref{resto} and
recalling that

\begin{eqnarray}
G(\bar{x},e_ny_n)= G_0(\Phi(\bar{x}),e_n y_n) +
R(\Phi(\bar{x}),e_n y_n)
\end{eqnarray}

we first find that

\begin{eqnarray}\label{asyfunsemplice}
\left|G(\bar{x}, e_n y_n) - \frac{1}{1+k}\Gamma(\bar{x},e_n
y_n)\right|\le\frac{C}{r_0^{\beta}}|\bar{x}-e_n y_n|^{\beta+2-n} \
\ ,
\end{eqnarray}

\begin{eqnarray}\label{asygrasemplice}
\ \  \ \ \ \left|\nabla_x G(\bar{x},e_n y_n ) -
\frac{1}{1+k}\nabla_x\Gamma(\bar{x},e_n
y_n)\right|\le\frac{C}{r_0^{\beta}}|\bar{x}-e-n y_n|^{\beta+1-n} \
\ ,
\end{eqnarray}

for a.e $\bar{x}\in D_{j_{l+1}}\cap
B_{\frac{r_0}{(4C)^{1/{\alpha}}}}$ and $y_n\in
(-r_1/(4C)^{1/(\alpha)},0)$, where $C$ depends on $M,\lambda, \bar{\gamma}$, $\bar{A}$, $\alpha$ and $n$ only. The
thesis then follows for the case $A(0)=I$.\\

To treat the general case when $A(0)\neq I$, we introduce the fundamental solution $H_{A(0)}$ of the operator
$\mbox{div}_{\xi}((1+(k-1))\chi^+(\xi)A(0)\nabla_{\xi})$. We set
$\sigma_I(\xi)= (1+(k-1))\chi^+(\xi)Id $ and $\sigma_{A(0)}(\xi)= (1+(k-1))\chi^+(\xi)A(0)$. Let us introduce the linear change of variable

\begin{eqnarray}
L: &&\mathbb{R}^n \rightarrow \mathbb{R}^n\\
   && \xi \mapsto x=L\xi := R\sqrt{A^{-1}(0)}\xi,
\end{eqnarray}

where $R$ is the planar rotation in $\mathbb{R}^n$ that rotates the unit vector $\frac{v}{||v||}$, where $v=\sqrt{A(0)}e_n$, to the n-th standard unit vector $e_n$ and such that

\[R|_{(\pi)^{\bot}}\equiv Id|_{(\pi)^{\bot}},\]

where $\pi$ is the plane in $\mathbb{R}^n$ generated by $e_n$, $v$ and $(\pi)^{\bot}$ denotes the orthogonal complement of $\pi$ in $\mathbb{R}^{n}$. For this choice of $L$ we have

\begin{description}
\item [i)] $A(0)=L^{-1}\cdot (L^{-1})^T$\ ,
\item [ii)] $(L\xi)\cdot e_n=\frac{1}{||v||} \xi\cdot e_n$.
\end{description}

which leads to

\[\sigma_{A(0)}(\xi)=L^{-1}\sigma_I(L \xi)(L^{-1})^T,\]

which means that $L^{-1}:x\mapsto\xi$ is the linear change of variables that maps $\sigma_I(x)$ into $\sigma_{A(0)}(\xi)$. Therefore the fundamental solution for the operator
$\mbox{div}_{\xi}((1+(k-1))\chi^+(\xi)A(0)\nabla_{\xi})$ turns out to be

\begin{equation}\label{funda0}
H_{A(0)}(\xi,\eta)= \left\{
\begin{array}
{lcl}\sqrt{\mbox{det} A^{-1}(0)}\left( \frac{1}{k}\Gamma(L\xi,L \eta) + \frac{k-1}{k(k+1)}\Gamma(L\xi,L^*\eta)\right)\ , &&\mbox{if}\ \xi_n,\eta_n>0\\
\sqrt{\mbox{det} A^{-1}(0)}\left(\frac{2}{k+1}\Gamma(L\xi,L\eta) \right)\ , &&\mbox{if}\ \xi_n\cdot \eta_n<0\ \\
\sqrt{\mbox{det} A^{-1}(0)}\left(\Gamma(L\xi,L \eta) + \frac{1-k}{k+1}\Gamma(L\xi,L^* \eta)\right)\ ,
&&\mbox{if}\ \xi_n,\eta_n<0
\end{array}
\right.
\end{equation}

where the matrix $L^*=\{l^*_{i,j}\}_{i,j=1}^n$ is such that $l_{i,j}^*=l_{i,j}$ for $i=1,\dots,n-1, j=1, \dots, n$ and $l_{n,j}^*=-l_{n,j}$ for $j=1\dots, n$. In particular we have that when $\xi_n\cdot \eta_n<0$ then

\[H_{A(0)}(\xi,\eta)=\sqrt{\mbox{det} A^{-1}(0)}\frac{2}{k+1} <A^{-1}(0)(\xi-\eta, \xi-\eta)>^{\frac{2-n}{2}}.\]

Hence for the case $A(0)\neq I$
\eqref{asyfunsemplice} and \eqref{asygrasemplice}
shall be replaced by

\begin{eqnarray*}
\left|G(\bar{x}, e_n y_n) - \frac{1}{1+k}<A^{-1}(0)(\bar{x} -e_ny_n),(\bar{x} -e_ny_n )>^{\frac{2-n}{2}} \right|\le\frac{C}{r_0^{\beta}}|\bar{x}-e_n y_n|^{\beta+2-n}
\ \ ,
\end{eqnarray*}

\begin{eqnarray*}
\left|\nabla_x G(\bar{x},e_n y_n ) -
\frac{1}{1+k}\nabla_x <A^{-1}(0)(\bar{x} -e_ny_n),(\bar{x} -e_ny_n )>^{\frac{2-n}{2}}  \right|\le\frac{C}{r_0^{\beta}}|\bar{x}-e_n y_n|^{\beta+1-n}
\ \ ,
\end{eqnarray*}

for  $\bar{x}\in D_{j_{l+1}}\cap
B_{\frac{r_0}{(4C)^{1/{\alpha}}}}$ and $y_n\in
(-r_1/(4C)^{1/(\alpha)},0)$ where $C$ depends on $M,\lambda, \bar{\gamma},$ $\bar{A}$ $\alpha$ and $n$ only.
Hence the thesis follows also for the general case.
\end{proof}


\subsection{\normalsize{Proof of unique continuation estimates}}\label{UC}
\setcounter{equation}{0}

Let $P_1$, $D_0$ $\Omega_0$, $(D_0)_r$ and $\tilde{G}_i$, for
$i=1,2$ be as in subsection \ref{subsection Green function}. Let
us fix $k\in\{2,\dots N\}$ and recall that there exist $j_1,\dots
j_K \in\{2,\dots N\}$ such that

\[D_{j_1}=D_1,\dots D_{j_K}=D_k.\]

We recall that

\[\mathcal{W}_K = \bigcup_{i=0}^{K}D_{j_i},\qquad \mathcal{U}_k = \Omega_0\setminus\overline{\mathcal{W}_K},\quad\textnormal{when}\:k\geq 0\]

($D_{j_0}=D_0$) and for any $y,z\in\mathcal{W}_K$

\[\tilde{S}_{\mathcal{U}_K}(y,z)=\int_{\mathcal{U}_K}(\tilde\sigma_A^{(1)}-\tilde\sigma_A^{(2)})\nabla\tilde{ G}_1(\cdot,y)\cdot\nabla\tilde{G}_2(\cdot,z),\quad \textnormal{when}\:k\geq 0.\]


The proof of Proposition 3.5 is a straight forward consequence of
the follwoing result (see \cite{A-V}[proof of Proposition 4.6]).

\begin{proposition}\label{proposizione pre unique continuation}
Let $v$ be a weak solution to

\[\mbox{div}\left(\tilde\sigma\nabla v\right)=0,\quad\mbox{in}\:\mathcal{W}_k,\]

where $\tilde\sigma$ is either equal to $\tilde\sigma_A^{(1)}$ or to
$\tilde\sigma_A^{(2)}$. Assume that, for given positive numbers
$\varepsilon_0$ and $E_0$, v satisfies

\begin{equation}\label{v condition 1}
|v(x)|\leq \varepsilon_0
r_0^{2-n},\quad\mbox{for\:every}\:x\in(D_0)_{\frac{r_0}{3}},
\end{equation}

and

\begin{equation}\label{vcondition 2}
|v(x)|\leq E_0 \left(r_0
d(x)\right)^{1-n/2},\quad\mbox{for\:every}\:x\in\mathcal{W}_k,
\end{equation}

where $d(x)=dist(x,\Sigma_{k+1})$. Then the following inequality
holds true for every $r\in(0,d_1]$

\begin{equation}
\left|v\left(w_{\bar{h}}(P_{k+1})\right)\right| \leq
r_0^{2-n}C^{\bar{h}}(E_0+\varepsilon_0)\left(\omega_{1/C}^{(k)}\left(\frac{\varepsilon_0}{E_0+\varepsilon_0}\right)\right)
^{\left(1/C\right)^{\bar{h}}}.
\end{equation}

\end{proposition}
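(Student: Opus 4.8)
The plan is to prove Proposition~\ref{proposizione pre unique continuation} via an iterated three-spheres (propagation of smallness along a chain) argument, following the scheme of Alessandrini--Vessella. The geometry is already prepared: the augmented domain $\Omega_0$ is of Lipschitz class, and the chain of subdomains $D_{j_0}=D_0, D_{j_1}=D_1, \dots, D_{j_k}$ joins the region $(D_0)_{r_0/3}$ — where $v$ is known to be small by \eqref{v condition 1} — to a neighbourhood of the target point $w_{\bar h}(P_{k+1})$ on $\Sigma_{k+1}$. The quantity $\tilde\sigma$ is a conformal perturbation $\tilde\gamma\tilde A$ of a Lipschitz matrix, hence uniformly elliptic with Lipschitz principal part on each $D_{j_i}$, so the interior three-spheres inequality for solutions of $\mathrm{div}(\tilde\sigma\nabla v)=0$ applies on each $D_{j_i}$ with constants depending only on the a-priori data; at the $C^{1,\alpha}$ interfaces $\Sigma_i$ we use a doubling/three-spheres inequality across the interface (again this is where anisotropy is harmless because, as the authors observe, the proof of this proposition ``is independent from the presence of anisotropy'').

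First I would fix a sequence of balls $B_{\rho_i}(c_i)\subset\subset D_{j_i}$, of radii comparable to $\rho_i=a^{i}\rho_1$ and centres $c_i$ marching toward $\Sigma_{k+1}$ along the normal directions, with the geometric parameters $\lambda_i,\rho_i,d_i,a$ exactly as defined in the excerpt; the ratio $a=\frac{1-\sin\beta_1}{1+\sin\beta_1}<1$ governs how fast the balls shrink as the chain approaches the boundary interfaces. On the first ball, sitting inside $D_0$, estimate \eqref{v condition 1} gives $|v|\le\varepsilon_0 r_0^{2-n}$, while the global bound \eqref{vcondition 2} gives $|v|\le E_0(r_0 d(x))^{1-n/2}$ on all of $\mathcal{W}_k$, in particular a bound of order $E_0 r_0^{2-n}$ (up to constants) near the first ball once $d(x)$ is comparable to $r_0$. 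Applying the interface three-spheres inequality successively — once to cross from $D_{j_{i-1}}$ to $D_{j_i}$ through $\Sigma_i$, and once to propagate from the entry region of $D_{j_i}$ to the ball $B_{\rho_i}(c_i)$ — one obtains at each step an inequality of the interpolation form
\[
\sup_{B_{\rho_i}(c_i)}|v|\;\le\;C\,\big(E_0+\varepsilon_0\big)^{1-\theta_i}\;\Big(\sup_{B_{\rho_{i-1}}(c_{i-1})}|v|\Big)^{\theta_i},
\]
with $\theta_i\in(0,1)$ bounded below away from $0$ by a constant depending only on the a-priori data. Composing these $k$ inequalities and tracking the exponents produces the $k$-fold composition $\omega_{1/C}^{(k)}$ and the power $(1/C)^{\bar h}$: each of the $\bar h$ iterations needed to shrink the last ball down to scale $r$ contributes one factor of the logarithmic modulus $\omega_{1/C}$, while the $k$ interface crossings contribute the composition depth. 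Finally, the bound at the last ball is transferred to the single point $w_{\bar h}(P_{k+1})=P_{k+1}-\lambda_{\bar h}\nu(P_{k+1})$, which by construction lies in that ball at distance $\sim d_{\bar h}\le r$ from $\Sigma_{k+1}$.

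The bookkeeping of constants — showing that the $C$'s, the $\theta_i$'s and the number of chain links stay controlled purely by $N,r_0,L,M,\alpha,\lambda,\bar\gamma,\bar A,n$ — is the routine but lengthy part; since this is identical to \cite{A-V}, I would state the needed interface three-spheres inequality precisely, verify it holds in the conformal-anisotropic setting (the only place anisotropy enters, and it enters benignly through the uniform ellipticity \eqref{unifellip} and the Lipschitz bound \eqref{aprioria} on $A$), and then refer to \cite{A-V} for the combinatorial iteration. The main obstacle, and the only genuinely delicate point, is ensuring that the smallness propagates \emph{through} the $C^{1,\alpha}$ interfaces where $\tilde\sigma$ jumps: one needs a quantitative unique-continuation / three-spheres statement valid for solutions across a $C^{1,\alpha}$ interface of a uniformly elliptic equation with Lipschitz-on-each-side coefficients. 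This follows by flattening the interface with the diffeomorphism $\Phi$ of Section~\ref{AE} (which is $C^{1,\alpha}$ and bi-Lipschitz, cf.~\eqref{bili}) and then invoking the piecewise-$C^{1,\alpha}$ regularity of Theorem~\ref{rego} together with a Carleman-estimate-based doubling inequality for the flattened two-phase operator; once this single ingredient is in place, the rest is exactly the Alessandrini--Vessella iteration.
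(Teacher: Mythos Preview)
Your proposal is correct and follows essentially the same approach as the paper, which itself defers entirely to \cite{A-V} and only sketches the key inductive estimate \eqref{claim 4.5 AV} (base case via global propagation of smallness \cite{A-R-R-V}, inductive step via the three-sphere inequality). One small correction to your bookkeeping: the $k$-fold composition $\omega_{1/C}^{(k)}$ arises from the $k$ interface crossings (each approach-to-boundary contributing one logarithmic loss), while the $\bar h$ final three-sphere iterations approaching $\Sigma_{k+1}$ contribute the exponent $(1/C)^{\bar h}$, not additional $\omega$ factors---you have these roles slightly reversed in your description.
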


\begin{proof} We observe that the proof of this result follows the same line of the argument used in \cite{A-V}[proof of Proposition 4.4] which is independent from the presence of isotropy/anisotropy in $\tilde\sigma$. In fact their proof is based on an argument of unique continuation which require $\tilde\sigma$ to be Lipschitz continuous and the interfaces between each domain $D_j$ to contain a $C^{1,\alpha}$ portion, therefore we simply recall \cite{A-V}[proof of Proposition 4.4] for a complete proof of this proposition. Here we simply recall for sake of completeness the main fact proven in \cite{A-V}[proof of Proposition 4.4].
By defining the quantities

\[r_1=\frac{r_0}{4},\qquad \bar\rho=\frac{r_1}{128\sqrt{1+L^2}}\]

let $y_m\in D_m$ be a point "near the portion" $\Sigma_{m+1}$ of the
interface between $D_m$ and $D_{m+1}$ defined by

\[y_m=P_{m+1}-\frac{r_1}{32}\nu(P_{m+1}),\]

where $P_{m+1}\in\Sigma_{m+1}$. Their main point is the proof of the following fact




\begin{equation}\label{claim 4.5 AV}
||v||_{L^{\infty}(B_{\bar\rho}(y_m))}\leq
r_0^{2-n}C^{m+1}(E_0+\varepsilon_0)\omega_{\frac{1}{C}}^{(m+1)}\left(\frac{\varepsilon_0}{E_0+\varepsilon_0}\right),
\end{equation}

where $\bar\rho$ has been chosen above so that
$B_{\bar\rho}(y_m)\subset D_m$. The proof of the above inequality is done by induction. A so-called argument of \textit{global propagation of smallness} is used there to prove \eqref{claim 4.5 AV} for $m=0$. We refer to \cite{A-R-R-V}, Theorem 5.3 for a complete treatment of this
topic. The rest of the proof is based on the \textit{three sphere inequality}, therefore we simply refer to \cite{A-V}[proof of Proposition 4.4] for this.

\end{proof}

\section*{\normalsize{Acknowledgments}}
The authors gratefully acknowledge the fruitful
conversations with G. Alessandrini who kindly exchanged ideas
about the global stability issue with the authors during the
preparation of this work.

\end{document}